\newtheorem{thm}{Theorem}[section]
\newtheorem{cor}[thm]{Corollary}
\newtheorem{lem}[thm]{Lemma}
\theoremstyle{definition}
\numberwithin{equation}{section}
\begin{document}


\baselineskip=17pt


\title{Continua having distal minimal actions by amenable groups}

\author{Enhui Shi}

\address[E.H. Shi]{School of Mathematical Sciences, Soochow University, Suzhou 215006, P. R. China}
\email{ehshi@suda.edu.cn}

\begin{abstract}
Let $X$ be a non-degenerate connected compact metric space. If $X$ admits a distal minimal action by a finitely generated amenable group,
then the first \v Cech cohomology group $ {\check H}^1(X)$ with integer coefficients is nontrivial.
In particular, if $X$ is homotopically equivalent to a CW complex, then $X$ cannot be simply connected.
\end{abstract}

\keywords{distality, amenable group, group action, minimality, cohomology}
\subjclass[2010]{37B05}

\maketitle

\pagestyle{myheadings} \markboth{E. H. Shi }{Distal minimal actions by amenable groups}

\section{Introduction}

The notion of distality was introduced by Hilbert for better understanding equicontinuity (\cite{El}). The study of minimal distal systems culminates in the
 beautiful structure theorem of H. Furstenberg (\cite{Fu}), which describes completely the relations between distality and equicontinuity for minimal systems.
 An interesting question is what compact manifold can support a  distal minimal group action? Clearly, the answer to this question depends on the
 topology of the phase space and the algebraic structure of the acting group. A remarkable result says that if a nontrivial space $X$ admits a
 distal minimal actions by abelian groups, then $X$ cannot be simply connected (see e.g. \cite[Chapter 7-Theorem 16]{Au}). Thus the $n$-sphere $\mathbb S^n$
does not admit any distal minimal abelain group actions. In \cite{Sh}, the author showed that if $X$ is a closed surface and $\Gamma$ is a lattice in ${\rm SL}(n, \mathbb R)$ with $n\geq 3$, then $\Gamma$ cannot act on $X$ distally and minimally.

We consider amenable group actions on continua and get the following theorem.

\begin{thm}\label{main theorem}
Let $X$ be a non-degenerate compact connected metric space. If $X$ admits a distal minimal action by a finitely generated amenable group,
then the first \v Cech cohomology group ${\check H}^1(X)$ with integer coefficients is nontrivial. In particular,
if $X$ is homotopically equivalent to a CW complex, then $X$ cannot be simply connected.
\end{thm}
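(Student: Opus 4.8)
The plan is to exploit the identification $\check H^1(X;\mathbb Z)\cong[X,S^1]$ of the first \v Cech cohomology with homotopy classes of continuous maps into the circle $S^1=\mathbb R/\mathbb Z$, which holds for compact metric $X$. Thus it suffices to produce a continuous map $X\to S^1$ that is not null-homotopic, and the source of such a map will be the maximal equicontinuous factor of the action. First I would recall that a nontrivial minimal distal system has a nontrivial maximal equicontinuous factor $\pi\colon X\to X_{eq}$: by Furstenberg's structure theorem the bottom floor of the distal tower over a point is already a nontrivial equicontinuous minimal system, and it is a factor of $X_{eq}$. Since $X$ is connected and $\pi$ is continuous and onto, $X_{eq}$ is a nondegenerate connected continuum as well.

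Next I would analyze $X_{eq}$. Being minimal and equicontinuous, it is a homogeneous space $G/H$ of the compact group $G=\overline{\phi(\Gamma)}\le\mathrm{Iso}(X_{eq})$, where $\phi\colon\Gamma\to G$ has dense image and $H$ is a point stabilizer. Here amenability enters decisively. For every compact Lie quotient $G\to L$ the image of $\Gamma$ is a finitely generated, dense, amenable subgroup of the linear group $L$; by the Tits alternative it contains no nonabelian free subgroup, hence is virtually solvable, so $L$ is virtually solvable and its identity component $L_0$ is a torus. Passing to the inverse limit, the identity component $G_0$ is abelian. Because $X_{eq}=G/H$ is connected we have $G=G_0H$, so the natural map $G_0/(H\cap G_0)\to G/H$ is a homeomorphism; as $G_0$ is abelian this exhibits $X_{eq}$ as a nondegenerate connected compact abelian group $A$. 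Consequently $\check H^1(X_{eq};\mathbb Z)\cong\widehat A\ne 0$, and $A$ carries many nontrivial continuous characters $\chi\colon A\to S^1$.

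It then remains to pull a nontrivial class back to $X$, i.e. to show that $\pi^{*}\colon\check H^1(X_{eq};\mathbb Z)\to\check H^1(X;\mathbb Z)$ is nonzero; equivalently, that for some character $\chi$ the map $f:=\chi\circ\pi\colon X\to S^1$ is essential. Here I would use the group action together with the connectedness of $X$. On the subgroup $\Gamma_0=\phi^{-1}(G_0)$ the action on $X_{eq}=A$ is by translations, so $f(\gamma x)=\lambda(\gamma)f(x)$ with $\lambda(\gamma)=\chi(\overline{\phi(\gamma)})$ a character of $\Gamma_0$, nontrivial for a suitable $\chi$. If $f$ were null-homotopic, write $f=e^{2\pi i h}$ with $h\colon X\to\mathbb R$ continuous; then for each fixed $\gamma\in\Gamma_0$ the map $x\mapsto h(\gamma x)-h(x)$ takes values in a single coset of $\mathbb Z$ and is continuous, hence constant because $X$ is connected. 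This defines a homomorphism $s\colon\Gamma_0\to\mathbb R$ lifting $\lambda$ modulo $\mathbb Z$ with $|s(\gamma)|\le 2\sup_X|h|$ for all $\gamma$; a bounded homomorphism into $\mathbb R$ is trivial, contradicting the nontriviality of $\lambda$. Hence $f$ is essential and $\check H^1(X;\mathbb Z)\ne 0$. Finally, if $X$ is homotopy equivalent to a CW complex then $\check H^1(X;\mathbb Z)=H^1(X;\mathbb Z)=\mathrm{Hom}(\pi_1(X),\mathbb Z)$, which vanishes when $X$ is simply connected, so $X$ cannot be simply connected.

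The main obstacle is this last step, the nonvanishing of $\pi^{*}$, and within it the control of the component group $G/G_0$. The eigenfunction--boundedness argument above goes through cleanly when $G_0$ is open in $G$ (equivalently, the component group is finite), since then $\phi(\Gamma)\cap G_0$ is dense in $G_0$ and the translation characters $\lambda$ are genuinely nontrivial. When $G/G_0$ is an infinite profinite group the translation subgroup $\Gamma_0$ may be too small to detect $\chi$, and one must instead argue that $\pi^{*}$ is injective directly: lifting $g\circ\pi=e^{2\pi i h}$ for an essential $g\colon X_{eq}\to S^1$, the restriction of $h$ to each fibre $\pi^{-1}(y)$ is locally constant, so the real obstruction is the possible disconnectedness of the fibres of the maximal equicontinuous factor map. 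Establishing that these fibres are connected, or else that the resulting covering-type data cannot trivialize an essential character, is the technical heart of the proof, and is where the interplay between distality, amenability, and the continuum structure of $X$ must be used most carefully.
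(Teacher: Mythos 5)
Your structural analysis of the equicontinuous factor is sound and runs parallel to the paper's: both arguments use Tits' alternative together with amenability to force the identity component of the acting compact group to be a torus (or, in your inverse-limit version, a compact connected abelian group), and both conclude that the nondegenerate connected equicontinuous factor has nontrivial first \v Cech cohomology. (The paper avoids inverse limits by first passing to a Lie group quotient $H/H'$ via Montgomery--Zippin, so that the factor it works with is literally $\mathbb T^m$; that is a cosmetic difference.) But your proof is not complete: as you yourself flag, the decisive final step --- that $\pi^{*}\colon\check H^1(X_{eq})\to\check H^1(X)$ is nonzero --- is exactly what is not established. Your eigenfunction/bounded-cocycle argument requires $\phi(\Gamma)\cap G_0$ to be large enough that some character $\lambda$ of $\Gamma_0$ is nontrivial, and this genuinely fails: for $\Gamma=\mathbb Z$ with $G$ the closure of $n\mapsto(n,n\alpha)$ in $\hat{\mathbb Z}\times\mathbb T$ one has $G_0=\{0\}\times\mathbb T$ nondegenerate yet $\phi(\Gamma)\cap G_0=\{e\}$. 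So your argument covers only the case where $G/G_0$ is finite, and the ``technical heart'' you point to is left open.

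The paper closes this gap with a tool you did not invoke, and it sidesteps both the component group and the connectivity of the fibres entirely: a factor map between two distal minimal systems is \emph{open} (Theorem \ref{homomorphism open}, from Auslander); an open surjection of compact metric spaces is \emph{confluent} (Whyburn, Theorem \ref{open confluent}); and a confluent map between continua induces an \emph{injective} homomorphism on first \v Cech cohomology (Lelek, Theorem \ref{homology injective}). Chaining these gives the injectivity of $\pi^{*}$ with no hypotheses on the fibres of $\pi$ whatsoever. If you wished to salvage your more hands-on lifting approach, you would essentially have to reprove Lelek's theorem in your setting, which is the hard part; the efficient route is to cite the openness of factor maps between distal minimal systems and then the Whyburn--Lelek results.
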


The following corollary is immediate.

\begin{cor}\label{main corollary}
The $n$-sphere $\mathbb S^n$ ($n\geq 2$)
does not admit any distal minimal actions by finitely generated amenable groups.
\end{cor}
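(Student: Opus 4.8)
The plan is to produce a continuous map $f\colon X\to\mathbb S^1$ that is not null-homotopic. Since $X$ is compact metric, the Bruschlinsky--Huber theorem identifies the set $[X,\mathbb S^1]$ of homotopy classes with $\check H^1(X;\mathbb Z)$, so exhibiting one non-null-homotopic $f$ already proves $\check H^1(X;\mathbb Z)\neq 0$. I would obtain $f$ as an eigenfunction of the action, pulled back from a nontrivial equicontinuous factor, and then rule out null-homotopy by a lifting argument that uses connectedness and compactness of $X$.

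First I would invoke the structure theory of distal minimal flows (the Furstenberg structure theorem, valid for an arbitrary acting group; cf.\ \cite{Au}). Because $(X,G)$ is distal, minimal and non-degenerate, its maximal equicontinuous factor $\pi\colon X\to Y$ is nontrivial: the Furstenberg tower of isometric extensions over a point has length at least one, and its first level is a nontrivial equicontinuous factor dominated by $Y$. The flow $(Y,G)$ is minimal and equicontinuous, so $Y$ is a homogeneous space $K/H$ of the compact metrizable enveloping group $K=\overline{\rho(G)}$, where $\rho\colon G\to K$ has dense image and $G$ acts through $\rho$ by translations. As a continuous image of the connected space $X$, the factor $Y$ is connected and nontrivial; passing to the identity component I may assume $K$ is connected.

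The crucial step, and the one place where amenability and finite generation enter, is to show that $K$ is abelian. Suppose not. By the Peter--Weyl theorem $K$ admits a nonabelian connected compact Lie quotient $L$, and the image $\Gamma$ of $\rho(G)$ in $L$ is dense, finitely generated and amenable. Since $L$ is linear, the Tits alternative forces the finitely generated amenable group $\Gamma$ to be virtually solvable; its solvable finite-index subgroup has closure of finite index in the connected group $L=\overline{\Gamma}$, hence equal to $L$, so $L$ is a connected compact solvable group, i.e.\ a torus---contradicting nonabelianness. Therefore $K$ is abelian, $H\trianglelefteq K$, and $Y=K/H$ is a nontrivial connected compact abelian group on which $G$ acts by translations with dense image. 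I note that amenability is genuinely essential here: a dense nonamenable subgroup of $\mathrm{SU}(2)$ acts distally and minimally on the simply connected space $\mathrm{SU}(2)=\mathbb S^3$, so this is the obstacle that amenability must and does remove. This abelianness is the main difficulty; the rest adapts the classical abelian-group argument.

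Finally, by Pontryagin duality the nontrivial connected $Y$ carries a nontrivial continuous character $\psi\colon Y\to\mathbb S^1$, and I set $f=\psi\circ\pi$. Then $f(gx)=\psi(\rho(g))\,f(x)=\chi(g)f(x)$, where $\chi(g)=\psi(\rho(g))$ is a character of $G$ that is nontrivial since $\rho(G)$ is dense and $\psi\neq 1$. To see $f$ is not null-homotopic, suppose it were and write $f=e^{2\pi i h}$ with $h\colon X\to\mathbb R$ continuous; writing $\chi(g)=e^{2\pi i\theta_g}$, the eigenvalue equation gives $h(gx)-h(x)-\theta_g\in\mathbb Z$ for each $g$, and connectedness of $X$ makes this integer constant, so $h(gx)=h(x)+c_g$. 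Iteration yields $h(g^nx)=h(x)+nc_g$, impossible for $c_g\neq 0$ because $h$ is bounded on the compact space $X$; hence every $c_g=0$, making $h$ and therefore $f$ invariant, contradicting $\chi\neq 1$. Thus $f$ represents a nontrivial class in $\check H^1(X;\mathbb Z)$. The ``in particular'' clause and Corollary~\ref{main corollary} then follow immediately: a connected space homotopy equivalent to a CW complex and simply connected has $\check H^1=0$ by the Hurewicz theorem and universal coefficients (Čech agreeing with singular cohomology there), and $\mathbb S^n$ with $n\geq 2$ is such a space.
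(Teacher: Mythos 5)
Your deduction of the corollary itself---$\mathbb S^n$ ($n\ge 2$) is a simply connected CW complex, so $\check H^1(\mathbb S^n)=0$ and Theorem~\ref{main theorem} forbids such an action---is exactly the paper's one-line argument. But the bulk of your proposal is a blind reproof of Theorem~\ref{main theorem}, and that reproof has a genuine gap at its self-declared ``crucial step,'' the claim that the enveloping group $K=\overline{\rho(G)}$ is abelian. This is false in general. Take $X=\mathbb T^2$ and let $G$ be generated by a translation $t$ by a vector $(\alpha,\beta)$ with $1,\alpha,\beta$ rationally independent together with the coordinate flip $\sigma(x,y)=(y,x)$. Then $G$ is finitely generated and metabelian (hence amenable), the action is isometric and minimal, and $K=\mathbb T^2\rtimes\langle\sigma\rangle$ is not abelian. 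Your argument fails to detect this because the reduction ``passing to the identity component I may assume $K$ is connected'' is incompatible with what follows: after replacing $K$ by $K_0$ you no longer know that $\rho(G)$, or even $\rho(G)\cap K_0$, is dense in $K_0$, yet your Peter--Weyl/Tits step explicitly uses density of the image of $\rho(G)$ in the Lie quotient $L$. Without the reduction, the argument only shows that every \emph{connected} Lie quotient of $K$ is a torus, and a nonabelian compact group need not admit any nonabelian connected Lie quotient, so abelianness of $K$ does not follow.

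The gap propagates to your lifting argument. What amenability actually yields (running the Tits step on Lie quotients of $K$ itself, as the paper does after Theorem~\ref{small group}) is that $K_0$ is a pro-torus; since $Y$ is connected, $K_0$ still acts transitively and $Y\cong K_0/(K_0\cap H)$ is a nontrivial compact connected abelian group with a nontrivial character $\psi$. But an element $g$ with $\rho(g)\notin K_0$ acts on $Y$ affinely---an automorphism of $K_0/(K_0\cap H)$ composed with a translation---so the eigenvalue identity $f(gx)=\chi(g)f(x)$ on which your entire boundedness argument rests fails for such $g$ (in the example, $f(\sigma x)=(\psi\circ\mathrm{flip})(\pi(x))$, not $\chi(\sigma)f(x)$). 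To conclude that $f=\psi\circ\pi$ is essential one then needs $\pi^*$ to be injective on $\check H^1$, which is precisely the openness-plus-confluence machinery (Theorems~\ref{homomorphism open}, \ref{open confluent} and \ref{homology injective}) that your eigenfunction shortcut was meant to replace. The route is salvageable, but only by reinstating that machinery; as written, the proof of the theorem, and hence your derivation of the corollary, is incomplete.
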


Here we remark that the class of amenable groups is strictly larger than that of abelian groups, which contains all solvable groups.
In addition, there do exist  distal minimal  actions on $\mathbb S^n$ by nonamenable groups, such the actions generated by some irrational rigid
 rotations around different axes of $\mathbb R^{n+1}$.

\section{Preliminaries}

In this section, we will recall some basic notions and introduce some results which will be used in the proof of the main theorem.

\subsection{Distal group actions}

Let $X$ be a topological space and let ${\rm Homeo}(X)$ be
the homeomorphism group of $X$. Suppose $G$ is a topological group. A group
homomorphism $\phi: G\rightarrow {\rm Homeo}(X)$ is called a {\it continuous
action} of $G$ on $X$ if $(x, g)\mapsto \phi(g)(x)$ is continuous; we use the symbol $(X, G, \phi)$ to denote this action.
The action $\phi$ is said to be {\it faithful} if it is injective.  For brevity, we usually use $gx$ or $g(x)$ instead of $\phi(g)(x)$
and use $(X, G)$ instead of $(X, G, \phi)$ if no confusion occurs.

For $x\in X$,  the {\it orbit} of $x$  is the set $Gx\equiv\{gx:g\in
G\}$; $K\subset X$ is called {\it $G$ invariant} if $Gx\in K $ for every $x\in X$;
$(X, G, \phi)$ is called {\it minimal} if $Gx$ is dense in $X$ for every $x\in X$, which is equivalent
to that $G$ has no proper closed invariant set; is called {\it transitive} if $Gx=X$ for every $x\in X$.
If $K\subset X$ is $G$ invariant, then we naturally get a restriction action $\phi|K$ of $G$ on $X$;
if $K$ is closed and nonempty, and the restriction action $(K, G, \phi|K)$ is minimal,
then we call $K$ a {\it minimal set}  of $X$ or of the action. It is well known that $(X, G, \phi)$
always has a minimal set when $X$ is a compact metric space.

Suppose $(X, G, \phi)$ and  $(Y, G, \psi)$ are two actions. If there is a continuous surjection  $f:X\rightarrow Y$ such that
$f(\phi(g)x)=\psi(g)f(x)$ for every $g\in G$ and every $x\in X$, then we say $f$ is a {\it homomorphism} and $(Y, G, \psi)$
is a {\it factor} of $(X, G, \phi)$. If $Y$ is a single point, then we call $(Y, G, \psi)$  a {\it trivial factor} of  $(X, G, \phi)$.

Assume further that $X$ is a compact metric space with metric $d$. The action $(X, G, \phi)$ is called
{\it equicontinuous} if for every $\epsilon>0$ there is a $\delta>0$ such that $d(gx, gy)<\epsilon$ whenever $d(x, y)<\delta$;
is called {\it distal}, if for every $x\not=y\in X$, $\inf_{g\in G}d(gx, gy)>0$. Clearly, equicontinuity implies distality.

The following results can be found in \cite{Au}.

\begin{thm}[\cite{Au}, p.98]\label{homomorphism open}
Let $(X, G, \phi)$ and  $(Y, G, \psi)$ be distal minimal actions, and let $f:X\rightarrow Y$ be a homomorphism.
Then $f$ is open.
\end{thm}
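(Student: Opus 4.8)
The plan is to reduce the statement to the lower semicontinuity of the fibre map and then to exploit the rigidity of the enveloping semigroup of a distal flow. Since $X$ and $Y$ are compact metric and $f$ is a continuous surjection, the set-valued map $y \mapsto f^{-1}(y)$ is automatically upper semicontinuous, so openness of $f$ is equivalent to its lower semicontinuity: whenever $y_\alpha \to y_0$ in $Y$ and $x_0 \in f^{-1}(y_0)$, there is a subnet together with points $x_\alpha \in f^{-1}(y_\alpha)$ such that $x_\alpha \to x_0$. Indeed, if this lifting property holds and $U \ni x_0$ is open while $f(U)$ fails to be a neighbourhood of $y_0$, one chooses $y_\alpha \to y_0$ outside $f(U)$, lifts to $x_\alpha \to x_0$, and finds $x_\alpha \in U$ eventually, a contradiction. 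Thus the whole problem becomes a lifting property of nets; recall that by hypothesis $(Y, G, \psi)$ is itself distal and minimal.

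The structural input comes from the Ellis (enveloping) semigroup $E(X) = \overline{\{\phi(g) : g \in G\}} \subseteq X^X$ in the product topology. Because the action is distal, $E(X)$ is a group (Ellis's characterisation of distality); because it is minimal, each evaluation $p \mapsto px$ is a continuous surjection $E(X) \to X$, so $E(X)$ acts transitively on $X$. The homomorphism $f$ induces a surjective semigroup homomorphism $\theta : E(X) \to E(Y)$ with $f(px) = \theta(p) f(x)$ for all $p \in E(X)$ and $x \in X$. From transitivity I would first extract the \emph{homogeneity of fibres}: given $x_0, z_0$ in a common fibre $f^{-1}(y_0)$, pick $p \in E(X)$ with $p z_0 = x_0$; then $\theta(p) y_0 = \theta(p) f(z_0) = f(p z_0) = f(x_0) = y_0$, so $\theta(p)$ fixes $y_0$ and $p$ carries the fibre over $y_0$ into itself. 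Hence the fibre group $\theta^{-1}(\mathrm{Stab}_{E(Y)}(y_0))$ acts transitively on $f^{-1}(y_0)$.

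The remaining task is to promote this homogeneity to the net-lifting property. Given $y_\alpha \to y_0$ and $x_0 \in f^{-1}(y_0)$, one would pick $z_\alpha \in f^{-1}(y_\alpha)$, pass to a convergent subnet $z_\alpha \to z_0 \in f^{-1}(y_0)$, use the fibre group to obtain $p \in E(X)$ with $p z_0 = x_0$, and then try to approximate $p$ by elements $g_\beta \to p$ of $G$ and transport the $z_\alpha$ near $x_0$. This is exactly where the main obstacle lies: the elements of $E(X)$ witnessing fibre transitivity are in general \emph{not} continuous, whereas the genuinely continuous maps coming from $G$ do not fix the base points $y_\alpha$, so a crude transport $g_\beta z_\alpha$ moves the $Y$-coordinate to $g_\beta y_\alpha$ and destroys the prescribed net. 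Reconciling continuous motion with base-point-preserving but discontinuous transport is the analytic heart of the theorem.

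To overcome this I would invoke the fine ideal structure of $E(X)$. Since $E(X)$ is a group its only idempotent is the identity, and equipping it with the Ellis $\tau$-topology makes the relevant stabilisers $\mathrm{Stab}_{E(X)}(x_0) \subseteq \theta^{-1}(\mathrm{Stab}_{E(Y)}(y_0))$ behave well enough that openness of $f$ can be read off from this subgroup picture, as in the Galois theory of minimal flows. Alternatively, and more transparently, one may appeal to the Furstenberg structure theorem in its relative form: the distal extension $f : X \to Y$ decomposes as a (possibly transfinite) tower of isometric extensions over $Y$. Each isometric extension is open, its fibres carrying a transitive action of a compact group of isometries; finite compositions of open maps are open; and the inverse-limit stages preserve openness by a compactness argument. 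Chaining these yields that $f$ is open, and I expect this last inverse-limit (that is, lower-semicontinuity) step to be the part demanding the greatest care.
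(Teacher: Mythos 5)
The paper offers no proof of this statement; it is quoted directly from Auslander's book [Au, p.~98], so there is nothing internal to compare against, and your proposal must stand on its own. It sets the problem up correctly: the reduction of openness to lower semicontinuity of $y\mapsto f^{-1}(y)$ is right, as are the group property of $E(X)$ under distality, the induced surjection $\theta\colon E(X)\to E(Y)$, and the transitivity of $\theta^{-1}(\mathrm{Stab}_{E(Y)}(y_0))$ on the fibre $f^{-1}(y_0)$. But it stops exactly where the theorem starts. You name the obstacle yourself: the elements of $E(X)$ that move points within a fibre are not continuous, so they cannot be used directly to transport the approximating net $z_\alpha\to z_0$ into a net converging to $x_0$. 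Neither of your two resolutions closes this gap. The first (``invoke the fine ideal structure of $E(X)$,'' the $\tau$-topology, ``the Galois theory of minimal flows'') is a gesture, not an argument: no lemma is formulated and no deduction is made. The proof at the cited page is a concrete hyperspace argument: $y\mapsto f^{-1}(y)$ is upper semicontinuous, hence continuous on a residual subset of $Y$; one then uses the circle operation of $E(X)$ on $2^X$ together with the fact that $E(X)$ is a group to carry continuity at one point to continuity at every point (roughly, $p\circ f^{-1}(y)=f^{-1}(py)$ and $p\circ(\cdot)$ is undone by $p^{-1}\circ(\cdot)$ when $E(X)$ is a group). That transport lemma is precisely the reconciliation of discontinuous fibre transport with convergent nets that you declare to be ``the analytic heart'' and then omit.

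Your second route, via the relative Furstenberg structure theorem, is not salvageable as stated either. It is logically backwards relative to the source being cited: in [Au] the openness of homomorphisms of distal minimal flows is established first and feeds into the development of the structure theory, so invoking the structure theorem here obliges you to exhibit, or at least cite, a proof of it that does not pass through openness; you do neither. Moreover, even granting the structure theorem, you defer rather than verify the two steps that carry content: that isometric extensions of minimal flows are open (true, because an isometric extension of a minimal flow is a factor of a compact group extension and quotient maps by compact group actions are open, but this uses minimality and deserves a sentence), and that openness passes to inverse limits. Ironically you flag the latter as the delicate step, whereas it is the easy one: a basic open set of $\varprojlim X_\alpha$ has the form $\pi_\alpha^{-1}(U)$ with $U$ open in $X_\alpha$, and since the projections $\pi_\alpha$ are surjective its image in $Y$ is $f_\alpha(U)$, which is open by the inductive hypothesis. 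As it stands, the proposal is an accurate map of the terrain with the central crossing left unbuilt.
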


\begin{thm}[\cite{Au}, p.104]\label{maximal fator}
Suppose $X$ is not a single point. If $(X, G, \phi)$ is distal minimal, then it has a nontrivial equicontinuous factor.
\end{thm}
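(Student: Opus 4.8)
The plan is to construct the maximal equicontinuous factor of $(X,G,\phi)$ explicitly and then to show that, under distality, it is nontrivial whenever $X$ is. The natural device is the \emph{regionally proximal relation} $Q\subseteq X\times X$: a pair $(x,y)$ lies in $Q$ if there are nets $x_\alpha\to x$, $y_\alpha\to y$ in $X$ and elements $g_\alpha\in G$ with $d(g_\alpha x_\alpha,g_\alpha y_\alpha)\to 0$. One checks directly that $Q$ is closed, symmetric, reflexive and $G$-invariant and contains the diagonal $\Delta$. The first routine step is the characterization that a minimal action is equicontinuous if and only if $Q=\Delta$. If the action is equicontinuous there is a compatible $G$-invariant metric for which every $g$ is an isometry, so along any defining nets $d(g_\alpha x_\alpha,g_\alpha y_\alpha)=d(x_\alpha,y_\alpha)\to d(x,y)$ forces $x=y$; conversely, a failure of equicontinuity produces, after applying the inverse group elements to the offending nets, a nondiagonal pair in $Q$.

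Second, I would let $S$ be the smallest closed $G$-invariant equivalence relation containing $Q$ and form the factor map $\pi\colon X\to X/S$. Since the regionally proximal relation of a factor is the image of $Q$, the quotient $X/S$ has trivial regionally proximal relation and is therefore equicontinuous, and any equicontinuous factor of $X$ factors through it; thus $X/S$ is the maximal equicontinuous factor. As a factor of a distal minimal flow it is again distal and minimal, so Theorem~\ref{homomorphism open} guarantees that $\pi$ is open and the quotient is a well-behaved compact metric minimal flow. With this in hand the theorem reduces to a single assertion: $S\neq X\times X$, equivalently the maximal equicontinuous factor is not a point.

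The heart of the argument, and the step I expect to be the main obstacle, is exactly this nontriviality. Distality immediately kills the \emph{proximal} relation $P=\{(x,y):\inf_{g}d(gx,gy)=0\}$, forcing $P=\Delta$, but it does not bound $Q$ in the same elementary way: in the definition of $Q$ the points $x_\alpha,y_\alpha$ move, and one cannot transfer smallness of $d(g_\alpha x_\alpha,g_\alpha y_\alpha)$ to $d(g_\alpha x,g_\alpha y)$ without equicontinuity. To control $Q$ I would pass to the enveloping (Ellis) semigroup $E=\overline{\{x\mapsto gx\}}\subseteq X^{X}$ and invoke Ellis's theorem that $(X,G)$ is distal precisely when $E$ is a group. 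Using that $E$ is then a compact right-topological group acting transitively on $X$ (transitivity coming from minimality, as every $y\in X$ equals $px$ for a limit $p$ of suitable $g_\alpha$), I would show that $Q$ is already a $G$-invariant equivalence relation, so $S=Q$, and that the associated factor is the homogeneous space of the maximal Hausdorff topological-group quotient of $E$; nontriviality of $X$ then prevents this quotient from collapsing to a point. This is the delicate part: the elementary observation that $X\times X$ is distal, hence pointwise almost periodic, shows only that $X\times X$ is not topologically transitive (the diagonal $\Delta$ is a proper minimal set), which is the shadow of weak mixing but not yet the existence of an equicontinuous factor, and bridging from ``not weakly mixing'' to ``nontrivial equicontinuous factor'' is precisely what the enveloping-semigroup analysis must supply.

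Once $X/S$ is known to be a nontrivial equicontinuous minimal factor, the theorem follows. If a function-theoretic packaging is preferred, the same content reads as the existence of a nonconstant almost periodic function on $X$ (one whose $G$-orbit is precompact in the sup norm), the Gelfand spectrum of the algebra of such functions being the maximal equicontinuous factor; there the obstacle reappears as the need to produce a single nonconstant almost periodic function, which again rests on the group structure of $E$.
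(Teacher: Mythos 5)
Your setup is standard and essentially sound: the regionally proximal relation $Q$, the characterization of equicontinuity of a minimal flow by $Q=\Delta$, the passage to the smallest closed invariant equivalence relation $S\supseteq Q$, and the identification of $X/S$ as the maximal equicontinuous factor. But the proof has a genuine gap at exactly the point you yourself flag as the crux, and the gap is not filled. The final assertion --- that the maximal Hausdorff topological-group quotient of the Ellis group $E$ is nontrivial, ``nontriviality of $X$ then prevents this quotient from collapsing to a point'' --- is not an argument; it is a restatement of the theorem. For a distal minimal flow $E$ is only a compact \emph{right-topological} group (multiplication continuous in one variable), and nothing elementary forces such a group to admit a nontrivial Hausdorff topological-group quotient, nor rules out $Q=X\times X$; indeed $Q=X\times X$ is precisely the condition that the maximal equicontinuous factor is a point. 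Ruling this out for a nontrivial distal minimal flow is the base step of Furstenberg's structure theorem and is the hardest part of that theory: the known proofs require substantial further input (Furstenberg's original construction of a nonconstant invariant function of almost periodic type, or Ellis's $\tau$-topology on the Ellis group showing the derived group is proper, or Namioka-type joint continuity theorems applied to $E$). Your proposal explicitly defers this (``precisely what the enveloping-semigroup analysis must supply'') without carrying it out, so the theorem is reduced to itself rather than proved. Note also that the paper offers no proof of this statement either --- it quotes it from Auslander's book, where it is obtained from exactly this structure-theoretic machinery --- so there is no shortcut in the source you could be rediscovering.

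Two secondary points, both repairable but currently unjustified. First, the claim that the regionally proximal relation of a factor is the image of $Q$ is false for arbitrary factor maps; one needs openness of the factor map (available here, since any factor of a distal minimal flow is distal minimal and Theorem~\ref{homomorphism open} applies, but this must be said, and one must be careful not to use it circularly when first constructing $X/S$). Second, the claim that $Q$ is already an equivalence relation for distal minimal flows is a theorem, proved via pointwise almost periodicity of the product flow $X\times X$, not a routine check. Even with these repaired, the missing nontriviality argument remains, and it constitutes the entire content of the statement.
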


\begin{thm}[\cite{Au}, p.52]\label{compact group}
Let $(X, G, \phi)$ be  equicontinuous. Then the closure $\overline{\phi(G)}$ in $C(X,X)$ with respect to
the uniform convergence topology is a compact topological group.
\end{thm}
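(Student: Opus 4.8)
The plan is to realise $E:=\overline{\phi(G)}$ as the uniform closure of an \emph{equicontinuous} family and to obtain compactness from Arzel\`a--Ascoli, after which the group and topological-group structure will come from net manipulations that repeatedly exploit equicontinuity. First I would note that, $X$ being compact, the family $\phi(G)\subseteq C(X,X)$ is equicontinuous and has all its values in the compact space $X$, so it is pointwise relatively compact; by the Arzel\`a--Ascoli theorem it is then relatively compact for the uniform topology, and hence $E$ is compact. I would also record the elementary fact that the uniform closure of an equicontinuous family is again equicontinuous, so that $E$ itself is equicontinuous.

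Next I would check that composition is \emph{jointly} continuous on $E\times E$. For $p,p',q,q'\in E$ one bounds $d\big(p(q(x)),p'(q'(x))\big)$ by $d\big(p(q(x)),p(q'(x))\big)+d\big(p(q'(x)),p'(q'(x))\big)$: the first summand is uniformly small once $q,q'$ are uniformly close, because equicontinuity of $E$ furnishes a modulus of continuity common to all $p\in E$, and the second is at most the uniform distance of $p$ and $p'$. Joint continuity turns $E$ into a compact semigroup under composition; since composition is continuous and $\phi(G)$ is closed under it, one gets $E\circ E\subseteq E$, and as $E$ contains $\mathrm{id}=\phi(e)$ it is a compact monoid.

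The crucial step is producing inverses, and here I would use that $\phi(G)$ is a \emph{group} of homeomorphisms: the family of inverses $\{\phi(g)^{-1}:g\in G\}=\{\phi(g^{-1}):g\in G\}$ coincides with $\phi(G)$ and so is equicontinuous and contained in the compact set $E$. Given $p\in E$, choose a net with $\phi(g_\alpha)\to p$; since $\phi(g_\alpha^{-1})$ lies in the compact $E$, a subnet converges to some $q\in E$. Passing to that subnet and invoking joint continuity of composition, $p\circ q=\lim\phi(g_\alpha)\circ\phi(g_\alpha^{-1})=\lim\phi(g_\alpha g_\alpha^{-1})=\mathrm{id}$, and symmetrically $q\circ p=\mathrm{id}$. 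Thus every $p\in E$ is a homeomorphism with inverse $q=p^{-1}\in E$, so $E$ is a group. Finally, to see that inversion is continuous I would argue by compactness: if $p_\alpha\to p$ in $E$, any convergent subnet of $p_\alpha^{-1}$ has a limit $r$ with $p\circ r=\mathrm{id}$, whence $r=p^{-1}$; as every subnet limit equals $p^{-1}$ and $E$ is compact, $p_\alpha^{-1}\to p^{-1}$. Together with joint continuity of composition this exhibits $E$ as a compact topological group.

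I expect the main obstacle to be exactly the invertibility step. For a general merely (one-sidedly) continuous compact semigroup arising as an Ellis-type closure one obtains only a semigroup, and the limit maps can fail to be injective, so one genuinely needs extra input. What rescues the argument is equicontinuity, which enters twice: once to guarantee that the inverse family $\{\phi(g^{-1})\}$ is again relatively compact inside $E$, and once to supply the joint continuity of composition that lets one pull inverses through limits. I would be careful to phrase the net-and-subnet passages correctly, since $G$ need not be countable and sequences would not suffice.
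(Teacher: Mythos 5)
Your argument is correct and complete: Arzel\`a--Ascoli for equicontinuous families of maps from a compact metric space into itself gives compactness of $E=\overline{\phi(G)}$, the common modulus of continuity yields joint continuity of composition, and the subnet argument applied to the inverse family $\phi(G)=\{\phi(g^{-1})\}$ correctly produces two-sided inverses and the continuity of inversion. The paper itself offers no proof of this statement --- it is quoted directly from Auslander --- and what you have written is essentially the standard textbook argument behind that citation, with the key point (that equicontinuity is exactly what upgrades the Ellis-type enveloping semigroup from a mere compact right-topological semigroup to a genuine compact topological group) correctly identified.
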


\subsection{Amenable groups}

{\it Amenability} was first introduced by von Neumann. Recall that a
countable group $G$ is {\it amenable} if there is a
sequence of finite sets $F_i$ ($i=1, 2, 3,\ \dots$) such that
$\lim\limits_{i\to\infty}\frac{|gF_i\bigtriangleup F_i|}{|F_i|}=0$
for every $g\in G$, where $|F_i|$ is the number of elements in
$F_i$; the set $F_i$ is called a {\text{F{\o}lner} \it set}.
For an abstract group $G$, if there is a sequence of normal subgroups
$G=G_0\rhd G_1\rhd...\rhd G_n=\{e\}$ such that $G_i/G_{i+1}$ is commutative for each
$i$, then $G$ is called {\it solvable}.

Now we list some well known facts about amenable groups and solvable groups. One may consult \cite{Pa}
for the details.

\begin{thm}\label{amenable}
(1)  Solvable groups and finite groups are amenable; (2) any
group containing a free noncommutative subgroup cannot be amenable; (3) every
subgroup of an amenable group (resp. solvable group) is amenable (resp. solvable);
(4) every quotient group of an amenable group (resp. solvable group) is amenable (resp. solvable).
\end{thm}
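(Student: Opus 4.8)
The plan is to route everything through the invariant-mean characterization of amenability, which is the standard equivalent of the F\o lner definition stated above: a group $G$ is amenable iff there is a finitely additive, left-invariant probability measure $\mu$ on the power set of $G$ (equivalently, a left-invariant state on $\ell^\infty(G)$). Means transform well under the group-theoretic operations in (3) and (4), whereas F\o lner sets are awkward to push through subgroups, so I would first invoke this equivalence from \cite{Pa} and then treat each clause. The solvability halves of (3) and (4) are purely algebraic and I would dispatch them first; the amenability halves of (1), (3), (4) are soft once means are available; and essentially the whole weight of the theorem sits in (2), which I would reduce to the single nontrivial fact that the free group $F_2=\langle a,b\rangle$ is not amenable.

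For the solvability statements I would use the derived series $G^{(0)}=G$, $G^{(i+1)}=[G^{(i)},G^{(i)}]$, noting that the chain of normal subgroups in the definition exists iff $G^{(n)}=\{e\}$ for some $n$. If $H\le G$ then $H^{(i)}\le G^{(i)}$ by induction, so $H^{(n)}=\{e\}$ and $H$ is solvable; if $q\colon G\to Q$ is surjective then $q(G^{(i)})=Q^{(i)}$, again by induction, so $Q^{(n)}=\{e\}$ and $Q$ is solvable. This settles the solvable clauses of (3) and (4). For the amenability clauses I would proceed by means. For (1), a finite group carries the normalized counting measure, which is manifestly left-invariant; abelian groups are amenable by the Markov--Kakutani fixed point theorem applied to the affine action of $G$ on the weak$^{*}$-compact convex set of means. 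To reach all solvable groups I would establish closure of amenability under extensions: given $N\lhd G$ with $N$ and $G/N$ amenable, a left-invariant mean on $N$ lets one average any $f\in\ell^\infty(G)$ over the cosets of $N$ to obtain a bounded function on $G/N$, and applying a left-invariant mean on $G/N$ to that function yields a left-invariant mean on $G$; iterating up the abelian chain of a solvable group gives amenability. Clause (3) then follows by restricting a mean along a transversal for the right cosets of $H$ in $G$, and clause (4) by pushing a mean on $G$ forward along $q$, i.e.\ setting $\bar\mu(A)=\mu(q^{-1}(A))$.

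The main obstacle is (2), and it rests entirely on the non-amenability of $F_2$. Here I would exhibit the standard paradoxical decomposition: writing $W_x$ for the set of reduced words beginning with the letter $x\in\{a,a^{-1},b,b^{-1}\}$, one has the partition $F_2=\{e\}\sqcup W_a\sqcup W_{a^{-1}}\sqcup W_b\sqcup W_{b^{-1}}$, and a short cancellation check gives $W_a\sqcup aW_{a^{-1}}=F_2$ and $W_b\sqcup bW_{b^{-1}}=F_2$. A left-invariant probability measure would then satisfy $\mu(W_a)+\mu(W_{a^{-1}})=\mu(W_a\sqcup aW_{a^{-1}})=1$ and likewise $\mu(W_b)+\mu(W_{b^{-1}})=1$; combining these with the partition identity $\mu(\{e\})+\mu(W_a)+\mu(W_{a^{-1}})+\mu(W_b)+\mu(W_{b^{-1}})=1$ forces $\mu(\{e\})=-1$, which is absurd. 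Hence $F_2$ is not amenable.

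Finally I would close the loop on (2). A noncommutative free group has rank at least two and therefore contains a copy of $F_2$, so any group $G$ containing a free noncommutative subgroup contains $F_2$. If such a $G$ were amenable, then by the amenability clause of (3) the subgroup $F_2$ would be amenable, contradicting the previous paragraph; thus no amenable group contains a free noncommutative subgroup, which is (2). The only genuinely hard input in the whole theorem is the paradoxical decomposition of $F_2$; the remaining steps are formal manipulations of invariant means and of the derived series.
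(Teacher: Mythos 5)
Your proposal is correct, but there is nothing in the paper to compare it against line by line: the paper does not prove Theorem \ref{amenable} at all, stating these as ``well known facts'' and deferring entirely to Paterson's book \cite{Pa}. What you have written is a correct and essentially complete rendition of the standard proofs that the citation stands in for. Your key structural choice --- passing from the F\o lner definition given in the paper to the invariant-mean characterization before doing anything else --- is exactly right, since F\o lner sets do not restrict well to subgroups or push forward cleanly, whereas means do: your coset-averaging argument for extensions, the transversal trick for clause (3) (where well-definedness of the averaged function on $G/N$ uses left-invariance of the mean on $N$, which your setup supplies), and the pushforward $\bar\mu(A)=\mu(q^{-1}(A))$ for clause (4) are all the textbook arguments and all check out. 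The derived-series treatment of the solvable halves is correct, and your paradoxical decomposition of $F_2$ is verified accurately: $aW_{a^{-1}}$ is precisely the set of reduced words \emph{not} beginning with $a$ (including $e$), giving $\mu(W_a)+\mu(W_{a^{-1}})=1$ and likewise for $b$, whence $\mu(\{e\})=-1$ contradicts nonnegativity; the reduction of (2) to this via clause (3) and the observation that any noncommutative free group contains $F_2$ is sound. The only inputs you leave uncited-but-unproved are the F\o lner--mean equivalence and Markov--Kakutani, both of which you correctly flag as imported from the literature, so your write-up is at least as self-contained as the paper's own treatment and identifies correctly that the single genuinely hard ingredient is the non-amenability of $F_2$.
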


The following remarkable result is known as Tits Alternative (see \cite{Ti}).

\begin{thm}\label{Tits}
Let $\Gamma$ be a finitely generated subgroup of a linear group. Then either $\Gamma$ contains a free nonabelian subgroup,
or $\Gamma$ has a finite index solvable subgroup.
\end{thm}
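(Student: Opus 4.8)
The plan is to establish the dichotomy by the \emph{ping-pong} (or table-tennis) method: whenever $\Gamma$ fails to be virtually solvable I will exhibit two elements acting on a projective space with transverse attracting and repelling dynamics, and then conclude that suitable high powers of them generate a free group of rank two. Throughout I regard $\Gamma\le\mathrm{GL}_n(k)$ for some field $k$. Since $\Gamma$ is finitely generated, its finitely many matrix entries generate a finitely generated field, so I may assume at the outset that $k$ is finitely generated over its prime field. If $\Gamma$ is virtually solvable we are already in the second alternative; so from now on I assume $\Gamma$ is \emph{not} virtually solvable and aim to build a nonabelian free subgroup.

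First I would pass to the Zariski closure $G=\overline{\Gamma}$ in $\mathrm{GL}_n$. Failure of virtual solvability forces $G$, hence its identity component $G^{0}$, to be non-solvable, so by the structure theory of linear algebraic groups the quotient of $G^{0}$ by its solvable radical is a nontrivial semisimple group. Using highest-weight theory for semisimple groups, I would then replace the given representation by an irreducible one in which the highest weight is realized on a one-dimensional line; in such a representation a sufficiently regular semisimple element acts \emph{proximally} on the projective space $\mathbb{P}(V)$, meaning it has an attracting fixed point lying off a repelling hyperplane. The purpose of this algebraic reduction is to arrange a strongly irreducible action of $\Gamma$ that admits proximal elements.

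The arithmetic core is to realize a dominant eigenvalue over a suitable completion. Here I would exploit that $k$ is finitely generated: if every eigenvalue of every element of $\Gamma$ had absolute value one in \emph{every} archimedean and non-archimedean absolute value of $k$, then a Kronecker-type (product-formula) argument would show these eigenvalues to be roots of unity, and a Zariski-dense subgroup of a nontrivial semisimple group cannot be constrained in this way — this contradicts our standing assumption, which is precisely where non-virtual-solvability is consumed. Hence there is an element $\gamma\in\Gamma$ and an absolute value on $k$, giving an embedding of $k$ into a local field $K$, for which $\gamma$ has a unique eigenvalue of maximal absolute value; thus $\gamma$ acts proximally on $\mathbb{P}(V\otimes_{k}K)$ with attracting point $x_\gamma^{+}$ and repelling hyperplane $H_\gamma^{-}$.

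Finally I would produce a \emph{second} proximal element in general position relative to the first. By strong irreducibility the $\Gamma$-orbit of the attracting data is not confined to any finite union of hyperplanes, so I can choose $h\in\Gamma$ with $h\,x_\gamma^{+}\notin H_\gamma^{-}$ and $x_\gamma^{+}\notin hH_\gamma^{-}$; then $a=\gamma$ and $b=h\gamma h^{-1}$ are proximal with disjoint attracting and repelling configurations. Passing to powers $a^{N},b^{N}$ with $N$ large, the north-south dynamics of proximal maps supplies four disjoint neighborhoods that these powers exchange in the pattern required by the ping-pong lemma, whence $\langle a^{N},b^{N}\rangle$ is free of rank two, completing the first alternative. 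I expect the arithmetic step to be the main obstacle: one must secure a single place in which some element is genuinely proximal, and since an element can be dominant at one completion yet elliptic at another, the difficulty is to control all places of the finitely generated field $k$ at once and guarantee nonvanishing dominance exactly when virtual solvability fails.
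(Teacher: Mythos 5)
The paper does not actually prove this statement: it is quoted as the Tits Alternative with a citation to \cite{Ti}, so the only meaningful comparison is with Tits's original argument --- and your outline does follow that route faithfully in its architecture (reduction to a finitely generated field, Zariski closure, passage to a semisimple quotient, a highest-weight representation with proximal elements, realization of a dominant eigenvalue over a local field, ping-pong). Two of your steps, however, contain genuine gaps rather than mere compression. The more serious one is the final ping-pong. A proximal element $a$ gives an attracting \emph{point} $x_a^{+}$ but only a repelling \emph{hyperplane} $H_a^{-}$, and its inverse need not be proximal: $a^{-N}$ pushes the complement of a neighborhood of $x_a^{+}$ toward $H_a^{-}$, not toward a point. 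The two-generator ping-pong for a free \emph{group} therefore forces your ping-pong sets to contain neighborhoods of the hyperplanes $H_a^{-}$ and $H_b^{-}$; but any two hyperplanes in $\mathbb{P}(V)$ intersect as soon as $\dim V\geq 3$, so the ``four disjoint neighborhoods'' you invoke do not exist. Your transversality conditions $h\,x_\gamma^{+}\notin H_\gamma^{-}$ and $x_\gamma^{+}\notin h H_\gamma^{-}$ yield only a free \emph{semigroup} on $a^{N}, b^{N}$. Tits repairs this by manufacturing elements that are proximal \emph{together with their inverses} (controlling the smallest as well as the largest eigenvalue at the chosen place, or passing to the dual/exterior-power representation), and then uses strong irreducibility to put the four attracting points and four repelling hyperplanes in general position; only for such ``biproximal'' pairs does ping-pong give freeness of the group.

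The second gap is the arithmetic step, which you describe as a ``Kronecker-type (product-formula) argument.'' Kronecker's theorem applies to algebraic integers all of whose conjugates have modulus one; for a general finitely generated field there is no product formula to exploit, and for transcendental eigenvalues the needed statement is Tits's lemma that any element of $k$ which is not a root of unity admits an embedding of $k$ into a locally compact field in which it has absolute value strictly greater than one --- proved by a direct construction with $p$-adic and formal Laurent-series fields, not by summing over places. Moreover, the implication ``all eigenvalues of all elements of $\Gamma$ are roots of unity $\Rightarrow$ contradiction with Zariski density in a nontrivial semisimple group'' is itself a substantive step: one uses that roots of unity in a finitely generated field have bounded order, so all elements are quasi-unipotent with bounded torsion part, and a Burnside--Schur/Jordan-type argument then forces virtual solvability. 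Since this is exactly the point at which non-virtual-solvability is ``consumed,'' as you put it, it cannot be left as an assertion. With biproximality secured and the arithmetic lemma stated correctly, your outline matches the proof in \cite{Ti}.
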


\subsection{Compact Lie groups}

Let $G$ be a connected Lie group and let ${\text {Lie}}(G)$ be the Lie algebra of $G$.
Recall that $G$ is said to be {\it solvable} if its Lie algebra is solvable, that is
there is a sequence of ideals ${\text {Lie}}(G)=\Im_0\rhd \Im_1\rhd...\rhd \Im_n=\{0\}$
such that $\Im_i/\Im_{i+1}$ is commutative for each $i$; this is equivalent to
the existence of a sequence of closed normal subgroups $G=G_0\rhd G_1\rhd...\rhd G_n=0$
such that $G_i/G_{i+1}$ is commutative for each $i$. If the Lie algebra ${\text {Lie}}(G)$
of $G$ contains no nontrivial solvable ideal, then $G$ is said to be {\it semisimple}.

The following theorems are classical in the theory of Lie groups.

\begin{thm}[\cite{Kn}, Corollary 4.25 ]\label{compact Lie group}
Let $G$ be a compact Lie group and let $\Im$ be the Lie algebra of $G$. Then
$\Im=Z(\Im)\bigoplus [\Im, \Im]$, where $Z(\Im)$ is the center of $\Im$ and
$[\Im, \Im]$ is semisimple.
\end{thm}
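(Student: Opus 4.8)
The plan is to use compactness of $G$ to manufacture an $\operatorname{Ad}(G)$-invariant inner product on $\Im$ and then run the complete-reducibility argument for the adjoint action. First I would fix an arbitrary inner product on $\Im$ and average it over $G$ against the normalized Haar measure; since $G$ is compact the integral converges and yields an inner product $\langle\cdot,\cdot\rangle$ invariant under every $\operatorname{Ad}(g)$. Differentiating the relation $\langle\operatorname{Ad}(g)Y,\operatorname{Ad}(g)Z\rangle=\langle Y,Z\rangle$ along the one-parameter subgroup $g=\exp(tX)$ at $t=0$ shows that each operator $\operatorname{ad}_X$ is skew-symmetric, i.e.
\[
\langle[X,Y],Z\rangle=-\langle Y,[X,Z]\rangle\qquad(X,Y,Z\in\Im),
\]
and this single identity is what everything rests on.

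From skew-symmetry I would extract the key structural fact that the orthogonal complement of an ideal is again an ideal: if $\mathfrak{a}\subseteq\Im$ is an ideal, $Y\in\mathfrak{a}^{\perp}$, $Z\in\mathfrak{a}$ and $X\in\Im$, then $\langle[X,Y],Z\rangle=-\langle Y,[X,Z]\rangle=0$ because $[X,Z]\in\mathfrak{a}$; hence $[X,Y]\in\mathfrak{a}^{\perp}$. Applying this to the center $Z(\Im)$ (an ideal of $\Im$) produces an orthogonal decomposition $\Im=Z(\Im)\oplus\mathfrak{s}$ into ideals, where $\mathfrak{s}:=Z(\Im)^{\perp}$. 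Since $Z(\Im)$ is central, bilinearity of the bracket gives $[\Im,\Im]=[\mathfrak{s},\mathfrak{s}]$, and because $\mathfrak{s}$ is an ideal this is contained in $\mathfrak{s}$.

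It remains to show $\mathfrak{s}$ is semisimple, which I would do by ruling out nonzero abelian ideals. Suppose $\mathfrak{a}\subseteq\mathfrak{s}$ is a nonzero abelian ideal of $\mathfrak{s}$. As $Z(\Im)$ is central, $[\Im,\mathfrak{a}]=[\mathfrak{s},\mathfrak{a}]\subseteq\mathfrak{a}$, so $\mathfrak{a}$ is an ideal of $\Im$; its complement $\mathfrak{a}^{\perp}$ is then an ideal, whence $[\mathfrak{a},\mathfrak{a}^{\perp}]\subseteq\mathfrak{a}\cap\mathfrak{a}^{\perp}=\{0\}$. Combined with $[\mathfrak{a},\mathfrak{a}]=0$ this forces $[\mathfrak{a},\Im]=0$, i.e. $\mathfrak{a}\subseteq Z(\Im)$, contradicting $\mathfrak{a}\subseteq Z(\Im)^{\perp}$. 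Thus $\mathfrak{s}$ has no nonzero abelian ideal and is semisimple. Finally, a semisimple Lie algebra is perfect---it is a direct sum of simple (hence non-abelian, hence perfect) ideals---so $[\mathfrak{s},\mathfrak{s}]=\mathfrak{s}$. Therefore $[\Im,\Im]=[\mathfrak{s},\mathfrak{s}]=\mathfrak{s}=Z(\Im)^{\perp}$, giving $\Im=Z(\Im)\oplus[\Im,\Im]$ with $[\Im,\Im]$ semisimple.

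The main obstacle is the very first step: the existence of the $\operatorname{Ad}$-invariant inner product is exactly where compactness of $G$ enters, and it is indispensable---for a general (noncompact) Lie group the adjoint action need not be completely reducible and the conclusion can fail. Once the invariant inner product is in hand the rest is formal manipulation with the skew-symmetry identity, so I would spend the most care verifying convergence and invariance of the averaged form and the standard fact that semisimple Lie algebras are perfect.
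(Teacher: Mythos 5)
Your proof is correct. The paper does not prove this statement at all---it is quoted directly from Knapp (Corollary 4.25)---and your argument (averaging to get an $\operatorname{Ad}$-invariant inner product, using skew-symmetry of $\operatorname{ad}$ to show orthogonal complements of ideals are ideals, and then identifying $Z(\Im)^{\perp}$ with the perfect semisimple ideal $[\Im,\Im]$) is precisely the standard textbook proof of that corollary, with all the key steps, including where compactness is used, correctly identified.
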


\begin{thm}[\cite{Kn}, Corollary 1.103]\label{abelian compact}
Let $G$ be a compact connected commutative Lie group of dimension $n$. Then $G$
is isomorphic to the $n$-torus $\mathbb T^n$.
\end{thm}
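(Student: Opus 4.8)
The plan is to realize $G$ as a quotient of its Lie algebra by a discrete subgroup and then appeal to the structure theory of such subgroups. Since $G$ is commutative, its Lie algebra $\mathfrak{g} = \mathrm{Lie}(G)$ has trivial bracket, $[\mathfrak{g}, \mathfrak{g}] = 0$, so as an additive group $\mathfrak{g}$ is just $\mathbb{R}^n$. The first step is to show that the exponential map $\exp \colon \mathfrak{g} \to G$ is a homomorphism from $(\mathfrak{g}, +)$ to $G$. This follows from the identity $\exp(X)\exp(Y) = \exp(X+Y)$, valid whenever $X$ and $Y$ commute; in an abelian Lie algebra all elements commute, so the identity holds for every $X, Y \in \mathfrak{g}$. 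Equivalently, one may invoke the Baker--Campbell--Hausdorff formula, whose higher-order terms all involve brackets and hence vanish. This is exactly where the commutativity hypothesis enters.

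Next I would establish that $\exp$ is surjective with discrete kernel. Its differential at the origin is the identity map of $\mathfrak{g}$, so $\exp$ is a local diffeomorphism near $0$ and its image contains an open neighborhood $U$ of the identity $e \in G$. Because $G$ is connected, it is generated by $U$; since $\exp$ is a homomorphism, its image $\exp(\mathfrak{g})$ is a subgroup containing $U$, and therefore equals $G$. The same local diffeomorphism property shows that $\Gamma := \ker(\exp)$ meets a neighborhood of $0$ only in $\{0\}$, so $\Gamma$ is a discrete subgroup of $\mathbb{R}^n$. By the first isomorphism theorem, $G \cong \mathbb{R}^n / \Gamma$ as Lie groups.

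The main work is the classification of discrete subgroups of $\mathbb{R}^n$: every such $\Gamma$ is free abelian of some rank $k \le n$ and admits a generating set $v_1, \dots, v_k$ that is linearly independent over $\mathbb{R}$, so that $\Gamma = \mathbb{Z}v_1 \oplus \cdots \oplus \mathbb{Z}v_k$ spans a $k$-dimensional subspace. Completing $v_1, \dots, v_k$ to a basis of $\mathbb{R}^n$ and changing coordinates then yields $\mathbb{R}^n/\Gamma \cong \mathbb{T}^k \times \mathbb{R}^{n-k}$. This step is the technical heart of the argument; it is proved by induction on $n$, using discreteness to extract a nonzero vector of minimal length in $\Gamma$ and then quotienting by the line it spans. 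I expect this lattice structure theorem to be the principal obstacle, since everything else is a formal consequence of $\exp$ being a surjective homomorphism with discrete kernel.

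Finally, compactness forces $k = n$. The factor $\mathbb{R}^{n-k}$ is compact only when $n - k = 0$, so if $G$ is compact then $\Gamma$ has full rank $n$ and $G \cong \mathbb{T}^n$; the dimension count is consistent since $\dim \mathbb{T}^n = n = \dim G$. Thus all three hypotheses are used: commutativity makes $\exp$ a homomorphism, connectedness makes it surjective, and compactness pins the rank of $\Gamma$ at $n$, giving the desired isomorphism $G \cong \mathbb{T}^n$.
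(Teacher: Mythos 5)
Your proof is correct and is essentially the classical argument behind the cited result: the paper itself gives no proof, quoting it directly from Knapp (Corollary 1.103), and Knapp's proof runs exactly along your lines --- commutativity makes $\exp\colon\mathfrak{g}\to G$ a homomorphism, connectedness makes it surjective with discrete kernel $\Gamma$, the structure theorem for discrete subgroups of $\mathbb{R}^n$ gives $G\cong\mathbb{T}^k\times\mathbb{R}^{n-k}$, and compactness forces $k=n$. Nothing is missing; at most you should note that the isomorphism $\mathbb{R}^n/\Gamma\cong G$ uses that $\exp$ is an open map, which follows immediately from its being an everywhere local diffeomorphism.
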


\begin{cor}\label{compact solvable}
Let $G$ be a connected compact Lie group. If $G$ is solvable, then $G$ is isomorphic to the $n$-torus $\mathbb T^n$.
\end{cor}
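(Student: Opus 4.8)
The plan is to reduce the solvability of $G$ to commutativity of its Lie algebra and then to invoke Theorem \ref{abelian compact}. Write $\Im = {\text{Lie}}(G)$. Since $G$ is solvable, by definition its Lie algebra $\Im$ is solvable. Because $G$ is compact, Theorem \ref{compact Lie group} gives the decomposition $\Im = Z(\Im) \oplus [\Im, \Im]$, in which $Z(\Im)$ is the center and $[\Im, \Im]$ is semisimple.

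The key observation is that $[\Im, \Im]$, being an ideal of the solvable Lie algebra $\Im$, is itself solvable, since subalgebras of solvable Lie algebras are solvable. But a semisimple Lie algebra contains no nonzero solvable ideal, by the very definition of semisimplicity recalled above. Applying this to $[\Im, \Im]$ relative to itself forces $[\Im, \Im] = \{0\}$: were it nonzero, it would be a nontrivial solvable ideal of itself, contradicting its semisimplicity. Consequently $\Im = Z(\Im)$, that is, $\Im$ is commutative.

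Next I would pass from the Lie algebra back to the group. Since $G$ is connected and its Lie algebra is abelian, $G$ is commutative: the image of the exponential map generates a connected Lie group, and vanishing of the bracket makes the group multiplication abelian via the Baker--Campbell--Hausdorff formula (equivalently, through the standard correspondence between connected Lie groups and their Lie algebras). Thus $G$ is a compact connected commutative Lie group, and Theorem \ref{abelian compact} yields $G \cong \mathbb T^n$ with $n = \dim G$, as claimed.

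The only genuinely delicate point is the implication that an abelian Lie algebra together with connectedness forces the group to be abelian, where connectedness is essential and cannot be dropped; the remaining steps are a direct unwinding of the definitions of solvable and semisimple together with the two cited structure theorems.
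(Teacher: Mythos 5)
Your proposal is correct and follows essentially the same route as the paper: both use the decomposition $\Im = Z(\Im)\oplus[\Im,\Im]$ from Theorem \ref{compact Lie group} to show that solvability forces $[\Im,\Im]=0$, then pass from the abelian Lie algebra to commutativity of the connected group $G$, and finish with Theorem \ref{abelian compact}. The only cosmetic difference is that you derive the contradiction directly from the semisimple summand $[\Im,\Im]$ being a solvable ideal of itself, while the paper phrases it via the quotient $\Im/Z(\Im)$; these are the same argument.
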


\begin{proof}
Let $\Im$ be the Lie algebra of $G$ and let $Z(\Im)$ be its center. If $[\Im, \Im]\not=0$,
then $\Im/Z(\Im)$ is semisimple by Theorem \ref{compact Lie group}.
However, this is impossible since $\Im/Z(\Im)$ is also solvable. So $[\Im, \Im]=0$ and hence
$\Im$ is commutative. This implies $G$ is commutative, since $G$ is connected.
It follows from Theorem \ref{abelian compact} that $G$ is isomorphic to the $n$-torus $\mathbb T^n$,
where $n$ is the dimension of $G$.
\end{proof}

\begin{thm}[\cite{Kn}, Corollary 4.22]\label{finite representation}
Let $G$ be a compact Lie group. Then $G$ is isomorphic to a closed linear group.
\end{thm}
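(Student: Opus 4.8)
The plan is to produce a single faithful finite-dimensional representation of $G$ and then observe that any such representation automatically realizes $G$ as a closed subgroup of a general linear group. The fundamental input is the Peter--Weyl theorem for compact groups: the matrix coefficients of finite-dimensional representations span a dense subalgebra of $C(G)$, and every finite-dimensional representation is equivalent to a unitary one (by averaging an inner product against Haar measure). The immediate consequence I would extract is that finite-dimensional representations \emph{separate points}: for each $g\neq e$ in $G$ there is a continuous homomorphism $\pi_g\colon G\to U(n_g)$ with $\pi_g(g)\neq I$, that is, $g\notin\ker\pi_g$.

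First I would record that the collection of kernels $\{\ker\pi\}$, as $\pi$ ranges over finite-dimensional representations, is closed under finite intersection, since $\ker(\pi\oplus\rho)=\ker\pi\cap\ker\rho$. Point separation then says precisely that $\bigcap_\pi\ker\pi=\{e\}$, where the intersection is taken over all such $\pi$.

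Next comes the key reduction from infinitely many representations to a single faithful one, and this is the step where the Lie hypothesis is genuinely used. Each $\ker\pi$ is a closed subgroup of the Lie group $G$, hence a compact Lie subgroup with a well-defined dimension and a finite number of connected components. I would argue that $G$ admits no infinite strictly descending chain of closed subgroups: along such a chain the dimension is non-increasing, and once the dimension stabilizes a strictly smaller closed subgroup of the same dimension must contain the identity component, so it is a proper union of components and the component count strictly drops. Applying this to the finite intersections $\ker\pi_1\cap\cdots\cap\ker\pi_k$, the resulting poset attains a minimal element $N$. By minimality $N$ is contained in every $\ker\pi$, so $N\subseteq\bigcap_\pi\ker\pi=\{e\}$, and the representation $\Pi=\pi_1\oplus\cdots\oplus\pi_k$ has trivial kernel; thus $\Pi$ is faithful and takes values in some $U(n)\subset GL(n,\mathbb{C})$.

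Finally I would check that $\Pi$ identifies $G$ with a closed linear group. As $\Pi$ is an injective continuous homomorphism from the compact group $G$ into the Hausdorff group $GL(n,\mathbb{C})$, it is a homeomorphism onto its image, and $\Pi(G)$, being compact, is closed in $GL(n,\mathbb{C})$. Hence $G\cong\Pi(G)$ is a closed linear group, as claimed. The main obstacle is exactly the finiteness reduction: Peter--Weyl supplies an abundance of representations but no single faithful one, and collapsing the intersection of their kernels to a finite subintersection is where compactness of $G$ must be reinforced by the Lie structure, through the Noetherian behaviour of closed subgroups governed by dimension and number of connected components.
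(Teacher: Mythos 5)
The paper offers no proof of this statement: it is imported verbatim from Knapp as Corollary 4.22, so there is no in-paper argument to compare against. Your proof is correct and is essentially the standard derivation of that corollary from the Peter--Weyl theorem: point separation by finite-dimensional unitary representations, a finiteness reduction producing a single faithful representation $\Pi$, and the automatic facts that a continuous injection of a compact group into the Hausdorff group $GL(n,\mathbb{C})$ is a homeomorphism onto its compact, hence closed, image. The one place where you diverge slightly from Knapp's own finiteness reduction is instructive: Knapp uses the no-small-subgroups property of Lie groups (a neighborhood $U$ of $e$ containing no nontrivial subgroup), covers the compact set $G\setminus U$ by finitely many kernel complements, and concludes that the resulting direct sum has kernel inside $U$, hence trivial; you instead establish a descending chain condition on closed subgroups of $G$ via Cartan's closed subgroup theorem, monotonicity of dimension, and the strict drop in component count once dimension stabilizes (using that compact Lie groups have finitely many components), then take a minimal finite intersection of kernels. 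Both routes use the Lie hypothesis exactly once and in the same role --- Peter--Weyl alone gives only $\bigcap_\pi\ker\pi=\{e\}$, which for general compact groups (e.g.\ infinite products of finite groups) cannot be collapsed to a finite subintersection --- and your DCC argument, including the step that a proper closed subgroup of equal dimension must contain the identity component because its Lie algebra is full, is complete and correct as written.
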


\subsection{Compact transformation groups}

Let $(X, G, \phi)$ be a group action and $H$ be a closed subgroup of $G$. Then we use $X/H$ to denote the orbit space
under the $H$ action, which is endowed with the quotient space topology. We use $G/H$ to denote the coset space with the quotient topology,
which is also the orbit space obtained by the left translation action on $G$ by $H$. If $H$ is a normal closed subgroup of $G$, then
$G/H$ is a topological group.

The following theorems can be seen in \cite{MZ}. We only state them in some special cases which are enough for our uses.

\begin{thm}[\cite{MZ}, p.65]\label{homogeneous space}
Let $X$ be a compact metric space and let $(X, G)$ be an action of group $G$ on $X$. Suppose $G$ is compact. Then for every
$x\in X$, $G/G_x$ is homeomorphic to $Gx$, where $G_x=\{g\in G: gx=x\}$.
\end{thm}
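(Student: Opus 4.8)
The plan is to realize the asserted homeomorphism as the map induced by the orbit map, and then upgrade a continuous bijection to a homeomorphism by a compactness argument. First I would fix $x\in X$ and consider the orbit map $\pi_x\colon G\to X$ defined by $\pi_x(g)=gx$. Since the action $(X,G)$ is continuous, $\pi_x$ is continuous, and its image is exactly the orbit $Gx$. Because $X$ is metric, hence Hausdorff, the singleton $\{x\}$ is closed, so the stabilizer $G_x=\pi_x^{-1}(\{x\})$ is a closed subgroup of $G$; in particular the coset space $G/G_x$ is well defined and carries the quotient topology induced by the quotient map $q\colon G\to G/G_x$.

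Next I would check that $\pi_x$ is constant on left cosets of $G_x$ and separates distinct cosets: for $g,h\in G$ one has $gx=hx$ if and only if $h^{-1}g\in G_x$, i.e. if and only if $gG_x=hG_x$. Consequently $\pi_x$ factors as $\pi_x=\bar\pi_x\circ q$, where $\bar\pi_x\colon G/G_x\to Gx$ is given by $\bar\pi_x(gG_x)=gx$ and is a bijection onto $Gx$. By the universal property of the quotient topology, $\bar\pi_x$ is continuous, since $\pi_x=\bar\pi_x\circ q$ is continuous and $q$ is the quotient map.

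It remains to show that $\bar\pi_x$ is open (equivalently closed), and here I would invoke compactness. Since $G$ is compact and $q$ is a continuous surjection, $G/G_x$ is compact. As a subspace of the metric space $X$, the orbit $Gx$ is Hausdorff. A continuous bijection from a compact space onto a Hausdorff space automatically maps closed (hence compact) sets to compact, hence closed, sets, so it is a homeomorphism; thus $\bar\pi_x$ is the desired homeomorphism. The argument is entirely standard and presents no genuine obstacle: the only subtle point is the a priori mismatch between the two candidate topologies on $Gx$, namely the one transported from $G/G_x$ via $\bar\pi_x$ and the subspace topology inherited from $X$, and this is precisely what the compact-to-Hausdorff principle reconciles in a single step, the hypothesis that $G$ be compact being exactly what makes it work.
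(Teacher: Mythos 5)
Your argument is correct and is the standard proof of this fact: the orbit map factors through $G/G_x$ to give a continuous bijection onto $Gx$, and compactness of $G/G_x$ together with the Hausdorff property of the metric space $X$ upgrades this to a homeomorphism. The paper quotes this result from Montgomery--Zippin without proof, and your argument is precisely the classical one, so there is nothing to compare beyond noting that it is complete and correct.
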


\begin{thm}[\cite{MZ}, p.99]\label{small group} Let $G$ be a compact group and let $U$ be an open neighborhood of the identity $e$.
Then $U$ contains a normal subgroup $H$ of $G$ such that $G/H$ is isomorphic to a Lie group.
\end{thm}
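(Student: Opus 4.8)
The plan is to realize $G/H$ as a closed subgroup of a unitary group, which is automatically a Lie group, by producing a single finite-dimensional unitary representation of $G$ whose kernel is squeezed inside $U$. The engine driving everything is the Peter--Weyl theorem, whose relevant consequence is that the continuous finite-dimensional unitary representations of a compact group separate points: for each $g\in G$ with $g\neq e$ there is a continuous homomorphism $\pi_g:G\to U(n_g)$ with $\pi_g(g)\neq I$. I would take this separation property as the main external input and organize the rest of the argument around it.

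First I would fix the neighborhood $U$ and consider the compact set $K=G\setminus U$; since $e\in U$, every $g\in K$ satisfies $g\neq e$, so the separation property supplies for each such $g$ a representation $\pi_g$ with $g\notin\ker\pi_g$. The set $V_g=G\setminus\ker\pi_g=\{h\in G:\pi_g(h)\neq I\}$ is open, being the preimage of an open set under the continuous map $h\mapsto\pi_g(h)$, and it contains $g$, so $\{V_g\}_{g\in K}$ is an open cover of the compact set $K$. Extracting a finite subcover $V_{g_1},\dots,V_{g_k}$ and forming the direct sum $\pi=\pi_{g_1}\oplus\cdots\oplus\pi_{g_k}:G\to U(n)$ with $n=n_{g_1}+\cdots+n_{g_k}$, I would set $H=\ker\pi=\bigcap_{i=1}^{k}\ker\pi_{g_i}$. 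This $H$ is a closed normal subgroup, and it is contained in $U$: if $h\notin U$ then $h\in K$, hence $h\in V_{g_i}$ for some $i$, so $\pi_{g_i}(h)\neq I$ and therefore $h\notin H$.

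It remains to see that $G/H$ is a Lie group. Since $G$ is compact and $\pi$ is continuous, $\pi(G)$ is a compact, hence closed, subgroup of the Lie group $U(n)$; by Cartan's closed-subgroup theorem $\pi(G)$ is itself a Lie group. The first isomorphism theorem gives a continuous group isomorphism $G/H\to\pi(G)$, and this map is a homeomorphism because it is a continuous bijection from the compact space $G/H$ onto the Hausdorff space $\pi(G)$. Thus $G/H\cong\pi(G)$ is a Lie group, completing the argument.

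The genuine content, and the step I expect to be the main obstacle, is the separation property invoked at the outset, i.e.\ the Peter--Weyl theorem itself; the compactness-and-direct-sum packaging above is soft. To supply Peter--Weyl one would use the bi-invariant Haar measure to form $L^2(G)$ and analyze the left regular representation: for a suitable continuous symmetric kernel the associated convolution operator is compact and self-adjoint, so the spectral theorem yields finite-dimensional eigenspaces invariant under right translation, and choosing the kernel to be a bump concentrated near $e$ forces these finite-dimensional subrepresentations to be nontrivial at any prescribed $g\neq e$. This is exactly the place where compactness of $G$, ensuring a finite Haar measure and compact convolution operators, is essential, and where the result would fail for general locally compact groups.
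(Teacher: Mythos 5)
Your proof is correct: the paper states this result without proof, citing Montgomery--Zippin, and your argument via Peter--Weyl (finite-dimensional unitary representations separate points, a compactness argument to combine finitely many of them, and Cartan's closed-subgroup theorem applied to the image in $U(n)$) is precisely the classical proof found in the cited source and standard references. All the steps check out, including the soft but necessary observation that the continuous bijection $G/H\to\pi(G)$ is a homeomorphism because $G/H$ is compact and $\pi(G)$ is Hausdorff.
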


\begin{thm}[\cite{MZ}, p.61]\label{induce action} Let $X$ be a compact metric space and let $(X, G)$ be an action of group $G$ on $X$.
 Suppose $G$ is compact and $H$ is a closed normal subgroup of  $G$. Then  $G/H$ can
 act on $X/H$ by letting $gH.H(x)=H(gx)$ for $gH\in G/H$ and $H(x)\in X/H$.
\end{thm}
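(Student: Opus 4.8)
The plan is to show that the formula $\bar\mu(gH, H(x)) := H(gx)$ defines a continuous action of $G/H$ on $X/H$, by checking in turn that it is well defined, that it obeys the two action axioms, and that it is continuous; only the last of these is genuinely delicate. Throughout, write $\mu\colon G\times X\to X$, $\mu(g,x)=gx$, for the given continuous action, and let $\pi\colon G\to G/H$ and $q\colon X\to X/H$ be the two quotient maps, so that $\pi(g)=gH$ and $q(x)=H(x)$.

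For well-definedness I would take $gH=g'H$ and $H(x)=H(x')$, so that $g'=gh_1$ and $x'=h_2x$ for some $h_1,h_2\in H$, and hence $g'x'=gh_1h_2x$. Using the normality of $H$, which gives $Hg=gH$, one computes $H(g'x')=Hg'x'=Hgh_1h_2x=gHh_1h_2x=gHx$, since $h_1h_2\in H$ forces $Hh_1h_2=H$; likewise $H(gx)=Hgx=gHx$, so $H(gx)=H(g'x')$. The action axioms are then immediate: $\bar\mu(eH,H(x))=H(ex)=H(x)$, and $\bar\mu(gH,\bar\mu(g'H,H(x)))=\bar\mu(gH,H(g'x))=H(gg'x)=\bar\mu((gg')H,H(x))$, which matches the group law $gH\cdot g'H=(gg')H$ on $G/H$.

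The main work is the continuity of $\bar\mu\colon G/H\times X/H\to X/H$. The key observation is that both quotient maps are open: for open $U\subseteq X$ one has $q^{-1}(q(U))=HU=\bigcup_{h\in H}hU$, which is open, so $q(U)$ is open, and the same left-translation argument (applied to $UH$) shows that $\pi$ is open. Consequently $\pi\times q\colon G\times X\to G/H\times X/H$ is an open continuous surjection, and therefore a quotient map. By the defining formula we have the identity $q\circ\mu=\bar\mu\circ(\pi\times q)$, whose left-hand side is continuous; since $\pi\times q$ is a quotient map, the universal property of quotient maps forces $\bar\mu$ to be continuous. The step I expect to be the main obstacle is exactly this descent: a product of quotient maps need not be a quotient map, so one cannot argue purely abstractly, and the argument hinges on upgrading $\pi$ and $q$ to \emph{open} maps (for which the compactness of $H$ is not even required) so that $\pi\times q$ is open and hence a quotient map.
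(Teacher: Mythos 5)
Your proof is correct, but note that there is nothing in the paper to compare it against: this statement is quoted from Montgomery--Zippin \cite{MZ} without proof, so the relevant benchmark is the standard textbook argument, which is exactly what you have reproduced. The well-definedness computation via normality ($Hg=gH$ as subsets of $G$), the verification of the two action axioms, and the descent of continuity are all sound, and you correctly identify the one genuinely delicate point: a product of quotient maps need not be a quotient map, so one must upgrade $\pi$ and $q$ to open maps via $q^{-1}(q(U))=\bigcup_{h\in H}hU$ and $\pi^{-1}(\pi(U))=\bigcup_{h\in H}Uh$, after which $\pi\times q$ is an open continuous surjection, hence quotient, and $\bar\mu$ descends from $q\circ\mu$. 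One caveat on your parenthetical that compactness of $H$ ``is not even required'': that is true for openness of the quotient maps and for continuity of $\bar\mu$, but compactness of $H$ is what makes $X/H$ Hausdorff (hence compact metrizable): the orbit relation $\{(x,hx):x\in X,\ h\in H\}$ is the continuous image of the compact set $X\times H$ in $X\times X$, hence closed, and an open quotient map with closed graph relation yields a Hausdorff quotient. This Hausdorffness is silently essential to how the theorem is used in the paper, where $Y/H'$ must be a compact metric space in order to apply Theorem \ref{homogeneous space} and conclude that $Y/H'$ is a manifold; so your proof of the literal statement is complete, but the aside slightly undersells what compactness of $H$ buys.
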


\subsection{First {\v C}ech cohomology group}

First we will recall an equivalent definition of the first {\v C}ech cohomology group with integer coefficients.
Let $\mathbb S^1$ be the unit circle in the complex plane. For any paracompact normal  space $X$,
let $C(X, \mathbb S^1)$ be the set of all continuous functions from $X$ to $\mathbb S^1$, and
let $I(X, \mathbb S^1)$ be the set of all $f\in C(X, \mathbb S^1)$ which is inessential (i.e. $f$ is homotopoic to a constant map
from $X$ to $\mathbb S^1$). Then under pointwise complex multiplication, $C(X, \mathbb S^1)$ becomes a commutative group
and  $I(X, \mathbb S^1)$ is a subgroup of  $C(X, \mathbb S^1)$. Define the first cohomology group ${\check H}^1(X)$ of $X$ by
${\check H}^1(X)=C(X, \mathbb S^1)/I(X, \mathbb S^1).$

Suppose $f:X\rightarrow Y$ is continuous. Then  $f$ naturally induced a group homomorphism
$f^*:H^1(Y)\rightarrow H^1(X)$ by letting $f^*([g])=[g\circ f]$ for any $[g]\in H^1(Y)$.
 The map $f$ is called {\it confluent} if for any subcontinuum $B$ of $Y$
and any component $A$ of $f^{-1}(B)$, we have $f(A)=B$.

The following theorem is due to Lelek (see \cite{Le} or \cite[Theorem 13.45]{Nad}).

\begin{thm}\label{homology injective}
Let $f:X\rightarrow Y$ be a confluent map from continuum $X$ onto continuum $Y$. Then
$f^*:H^1(Y)\rightarrow H^1(X)$ is injective.
\end{thm}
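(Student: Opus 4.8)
The plan is to unwind the definition of $\check{H}^1$ given above and convert the statement into a lifting problem. Since $\check{H}^1(X)=C(X,\mathbb S^1)/I(X,\mathbb S^1)$ and $f^*[g]=[g\circ f]$, the kernel of $f^*$ is exactly the set of classes $[g]$ for which $g\circ f$ is \emph{inessential}; thus $f^*$ is injective precisely when, for every $g\in C(Y,\mathbb S^1)$, the inessentiality of $g\circ f$ forces $g$ to be inessential. I would invoke the classical lifting criterion for the covering $\exp\colon\mathbb R\to\mathbb S^1$: a circle-valued map on a paracompact normal space is inessential if and only if it lifts through $\exp$ (one direction because $\mathbb R$ is contractible, the other by the homotopy lifting property). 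So the hypothesis produces a continuous $h\colon X\to\mathbb R$ with $\exp(i\,h(x))=g(f(x))$, and the problem becomes: produce a continuous $k\colon Y\to\mathbb R$ with $\exp(i\,k)=g$.

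The natural candidate is to descend $h$ through $f$, i.e.\ to show $h$ is constant on each fiber $f^{-1}(y)$; granting this, $k$ is well defined by $k\circ f=h$ and is continuous because $f$, being a continuous surjection of the compact space $X$ onto the metric space $Y$, is closed and hence a quotient map. Then $\exp(i\,k)\circ f=\exp(i\,h)=g\circ f$, and surjectivity of $f$ gives $\exp(i\,k)=g$. Two elementary facts constrain $h$ on a fiber: since $\exp(i\,h)\equiv g(y)$ there, $h$ takes values in the single coset $\arg g(y)+2\pi\mathbb Z$, so it is locally constant on the compact fiber, whence constant on each component and attaining only finitely many values. Equivalently, forming the $\mathbb Z$-covering $\Gamma=\{(y,t)\in Y\times\mathbb R:\exp(i\,t)=g(y)\}\xrightarrow{\ \pi_Y\ }Y$ (a covering because $g$ lifts locally), the map $F=(f,h)\colon X\to\Gamma$ has image a subcontinuum $K$ with $\pi_Y(K)=Y$, and the required constancy is exactly the injectivity of $\pi_Y|_K$.

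Confluence enters in controlling how the sheets of $\Gamma$ may be joined over the fibers. First one checks that $\pi_Y|_K\colon K\to Y$ is itself confluent: for a subcontinuum $B\subseteq Y$ and a component $A'$ of $K\cap\pi_Y^{-1}(B)$, choose a component $A$ of $f^{-1}(B)=F^{-1}(K\cap\pi_Y^{-1}(B))$ meeting $F^{-1}(A')$; then $F(A)$ is connected, lies in $K\cap\pi_Y^{-1}(B)$ and meets $A'$, so $F(A)\subseteq A'$, and confluence of $f$ gives $\pi_Y(A')\supseteq f(A)=B$, hence $\pi_Y(A')=B$. Next, writing $m(y)$ and $M(y)$ for the least and greatest $h$-values on $f^{-1}(y)$, the closedness of $f$ makes $m$ lower and $M$ upper semicontinuous (e.g.\ $\{m\le a\}=f(h^{-1}((-\infty,a]))$ is closed); since any two $h$-values over $y$ differ by a multiple of $2\pi$, the \emph{bad set} $S=\{y:M(y)>m(y)\}=\{y:M(y)-m(y)\ge 2\pi\}$ equals $\{M-m\ge 2\pi\}$ and is therefore closed. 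The heart of the argument is to use confluence to prove $S=\varnothing$: given a bad point $y_0$ and a point $q$ where the fiber is single-valued, one applies confluence to a subcontinuum $B$ running from $q$ to $y_0$ to show that the two sheets of $K$ over $y_0$ must both meet the unique point of $K$ over $q$, hence lie in one component of $K\cap\pi_Y^{-1}(B)$; tracking the height function $\tau(y,t)=t$ along that component is then meant to contradict the local triviality of $\Gamma$.

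The main obstacle is precisely this last step—turning confluence into global constancy of the lift across possibly disconnected fibers—and it is sharpened by the fact that $Y$ is only assumed to be a continuum. In particular $Y$ need not be locally connected, so one cannot simply pick small connected neighborhoods on which $g$ trivializes; the confluence hypothesis is exactly what compensates, guaranteeing that every component of the preimage of a subcontinuum is large enough to map \emph{onto} it. To make the sheet-merging rigorous I would substitute the missing local connectedness with standard continuum-theoretic tools—irreducible subcontinua between two points and the boundary bumping lemma (see \cite{Nad})—to manufacture the subcontinua $B\subseteq Y$ to which confluence is applied, and combine this with the semicontinuity bookkeeping above to show that the locus carrying two distinct sheets is simultaneously open and closed, hence empty. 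This is the point at which Lelek's theorem acquires its genuine content, and it is the step I expect to demand the most care.
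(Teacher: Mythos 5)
The paper does not prove this statement: it is quoted as a known theorem of Lelek, with references to \cite{Le} and to \cite[Theorem 13.45]{Nad}. So your proposal has to stand on its own, and the parts you actually carry out are correct and are essentially the standard (Eilenberg-style) framework: the reduction of injectivity of $f^*$ to the implication ``$g\circ f$ inessential $\Rightarrow$ $g$ inessential,'' the lifting criterion through $\exp$, the induced $\mathbb Z$-covering $\Gamma\to Y$, the image continuum $K=F(X)$, the verification that $\pi_Y|_K$ is confluent (that computation is complete and right), and the semicontinuity of $m$ and $M$ giving closedness of the bad set $S$. The problem is that the theorem's entire content lies in the one step you do not execute, namely $S=\varnothing$, and your sketch of that step has two concrete defects.

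First, you presuppose ``a point $q$ where the fiber is single-valued.'' Nothing you have proved supplies such a point: a subcontinuum of a $\mathbb Z$-cover can surject onto the base with every fiber of cardinality at least $2$ (e.g.\ $K=[0,4\pi]\subseteq\mathbb R$ over $\mathbb S^1$ under $\exp$), so the existence of $q$ must itself be extracted from confluence, and you have not done so. Second, even granting $q$, the intended contradiction ``with the local triviality of $\Gamma$'' only materializes when the connecting subcontinuum $B$ lies inside an evenly covered open set, in which case a component of $K\cap\pi_Y^{-1}(B)$ sits in a single sheet and cannot contain two points over $y_0$; for $q$ and $y_0$ far apart this argument does not apply directly, and one must propagate single-valuedness along a chain of small subcontinua, which is exactly where boundary bumping and a clopen-set argument are needed. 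Relatedly, your closing plan — show the bad locus is clopen and conclude by connectedness of $Y$ — requires \emph{openness} of $S$ together with $S\neq Y$ (or closedness of the good set together with its nonemptiness); you have established only that $S$ is closed, which by itself yields nothing. Until the openness/nonemptiness half is supplied, the proof has a genuine gap at its decisive point, and the configuration you are trying to exclude really does occur for non-confluent maps, so whatever argument fills the gap must invoke confluence a second time in an essential way.
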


The following theorem is due to Whyburn (see \cite{Wh2} or \cite[Theorem 13.14]{Nad}).

\begin{thm}\label{open confluent}
Every open map of one compact metric space onto another is confluent.
\end{thm}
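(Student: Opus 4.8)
The plan is to unwind the definition of confluence and reduce the statement to a single assertion: if $f\colon X\to Y$ is an open surjection of compacta, $B\subseteq Y$ is a subcontinuum and $A$ is a component of $Z:=f^{-1}(B)$, then $f(A)=B$. Since $Z$ is compact (being closed in $X$) and $A$ is closed in $Z$, the set $A$ is compact, so $f(A)$ is a nonempty closed subset of the connected space $B$ with $f(A)\subseteq B$; thus everything hinges on the reverse inclusion, which I would obtain by a connectedness argument. The idea is to approximate the component $A$ from outside by subsets of $Z$ that are \emph{clopen in} $Z$, to show that each such clopen piece already maps \emph{onto} all of $B$, and then to pass down to $A$ itself.

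The engine is a uniform openness estimate: because $X$ is compact and $f$ is open, for every $\varepsilon>0$ there is $\delta>0$ such that $B(f(x),\delta)\subseteq f(B(x,\varepsilon))$ for all $x\in X$. I would prove this by a routine compactness argument, noting that $r(x):=\sup\{\rho: B(f(x),\rho)\subseteq f(B(x,\varepsilon))\}$ is positive for each $x$, and deriving a contradiction from $\inf_x r(x)=0$ by using a convergent sequence of near-minimizers together with the openness of $f$ at the limit point. Granting this, fix any nonempty set $C$ that is clopen in $Z$. Then $C$ and $Z\setminus C$ are disjoint compacta, so $\eta:=d(C,Z\setminus C)>0$ (interpreted as $+\infty$ when $C=Z$). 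Choosing $\varepsilon<\eta$ and the corresponding $\delta$, I claim $f(C)$ is open in $B$: given $q'\in f(C)$ with a preimage $x'\in C$ and any $y\in B$ with $d(y,q')<\delta$, uniform openness yields $x''\in B(x',\varepsilon)$ with $f(x'')=y$; since $y\in B$ we have $x''\in Z$, and since $d(x'',x')<\varepsilon<\eta$ the point $x''$ cannot lie in $Z\setminus C$, so $x''\in C$ and $y\in f(C)$. As $f(C)$ is also closed in $B$ and nonempty, connectedness of $B$ forces $f(C)=B$.

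It remains to descend from the clopen approximants to the component $A$, and this is where the real work lies. In a compact Hausdorff space components coincide with quasicomponents, so $A$ is exactly the intersection of the family $\mathcal C$ of all subsets of $Z$ that are clopen in $Z$ and contain $A$; moreover $\mathcal C$ is closed under finite intersections, hence downward directed. By the previous paragraph $f(C)=B$ for every $C\in\mathcal C$. To conclude $f(A)=B$, fix $y\in B$; then each fibre piece $f^{-1}(y)\cap C$ is nonempty and compact, and these sets are downward directed, so by the finite intersection property their total intersection $f^{-1}(y)\cap\bigcap_{C}C=f^{-1}(y)\cap A$ is nonempty, giving $y\in f(A)$. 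Hence $B\subseteq f(A)$ and therefore $f(A)=B$. The main obstacle is precisely this last descent: openness readily shows that \emph{clopen} pieces of $f^{-1}(B)$ surject onto $B$, but the component $A$ is in general not open in $Z$ and carries no separating gap, so one cannot run the openness estimate directly on $A$; the finite intersection argument, powered by compactness of the fibres, is what transfers surjectivity from the clopen approximants down to $A$.
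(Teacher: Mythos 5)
Your proof is correct, but note that the paper contains no proof of this statement to compare against: it is imported as a classical theorem of Whyburn, cited to \cite{Wh2} and to \cite[Theorem 13.14]{Nad}, and used as a black box (together with Lelek's theorem) to deduce Corollary \ref{open inverse}. Your argument is a sound self-contained reconstruction along the standard lines of the cited sources: (i) each of your three steps checks out --- the uniform openness estimate is a routine compactness argument; the gap $\eta=d(C,Z\setminus C)>0$ between a clopen-in-$Z$ piece $C$ and its complement correctly forces the approximate preimage $x''$ into $C$, so $f(C)$ is relatively open, closed, and nonempty in the connected set $B$, hence equals $B$; and (ii) the descent via quasicomponents is exactly the right mechanism, since in the compact space $Z$ the component $A$ is the intersection of the downward directed family of clopen sets containing it, and the finite intersection property of the nonempty compacta $f^{-1}(y)\cap C$ transfers surjectivity to $A$. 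One simplification worth knowing: the quantitative $\varepsilon$--$\delta$ lemma can be bypassed entirely. Given $C$ clopen in $Z$, choose an open $U\subseteq X$ with $C\subseteq U$ and $\overline{U}\cap(Z\setminus C)=\emptyset$; then $f(U)$ is open in $Y$ because $f$ is open, and any $y'\in B\cap f(U)$ has a preimage $x\in U\cap f^{-1}(B)=U\cap Z\subseteq C$, so $B\cap f(U)\subseteq f(C)$ and $f(C)$ is open in $B$ directly. This is essentially Whyburn's argument, and it works verbatim for compact Hausdorff (not just metric) spaces, whereas your metric gap $\eta$ and uniform openness tie the proof to the metric setting --- harmless here, since the paper only needs the metric case.
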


From Theorem \ref{homology injective} and Theorem \ref{open confluent}, we immediately get the following corollary.

\begin{cor}\label{open inverse}
Let $f:X\rightarrow Y$ be an open map from continuum $X$ onto continuum $Y$. Then
$f^*:H^1(Y)\rightarrow H^1(X)$ is injective.
\end{cor}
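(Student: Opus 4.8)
The plan is simply to chain together the two cited structural results, Theorem~\ref{open confluent} and Theorem~\ref{homology injective}, since each supplies exactly one half of the implication we need. First I would check that the hypotheses line up: a continuum is by definition a compact connected metric space, so both $X$ and $Y$ qualify as compact metric spaces, and $f$ is assumed to be an open surjection between them. This is precisely the setting of Theorem~\ref{open confluent}, which therefore guarantees that $f$ is confluent.

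With confluence in hand, the second step is immediate. Theorem~\ref{homology injective} asserts that any confluent map from a continuum onto a continuum induces an injection on the first \v Cech cohomology with integer coefficients. Since $X$ and $Y$ are continua and $f$ is the confluent surjection produced in the first step, I would apply this theorem verbatim to conclude that $f^*:H^1(Y)\rightarrow H^1(X)$ is injective, which is exactly the assertion of the corollary.

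There is no genuine obstacle here: all of the mathematical content is carried by Whyburn's theorem (open $\Rightarrow$ confluent) and Lelek's theorem (confluent $\Rightarrow$ injective on $H^1$), both of which are quoted above. The only point deserving attention is the bookkeeping of hypotheses, namely that the notion of ``continuum'' appearing in Theorem~\ref{homology injective} is the same compact-connected-metric notion to which Theorem~\ref{open confluent} applies; once this compatibility is noted, the two theorems compose with nothing further to prove.
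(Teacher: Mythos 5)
Your proof is correct and matches the paper's own argument exactly: the corollary is obtained by composing Theorem~\ref{open confluent} (open implies confluent) with Theorem~\ref{homology injective} (confluent implies injective on ${\check H}^1$). Nothing further is needed.
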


\section{Proof of the main theorem}

\begin{lem}\label{solvable closure}
Let $G$ be a Lie group and $\Gamma$ be a dense subgroup of $G$. If $\Gamma$ is solvable as an abstract group,
then $G$ is a solvable Lie group.
\end{lem}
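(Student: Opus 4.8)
The plan is to run the derived series and exploit the continuity of the commutator map to show that closing up a solvable group keeps it solvable. Write $H^{(0)}=H$ and $H^{(k+1)}=[H^{(k)},H^{(k)}]$ for the derived series of a group $H$, so that $\Gamma$ being solvable means $\Gamma^{(n)}=\{e\}$ for some $n$. Since $\Gamma$ is dense, $G=\overline{\Gamma}$, and the whole argument rests on comparing the derived series of $G$ with that of $\Gamma$.

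The key step is the inclusion $[\overline{H},\overline{H}]\subseteq\overline{[H,H]}$ for any subgroup $H$ of a topological group. I would deduce this from the continuity of the commutator map $c(x,y)=xyx^{-1}y^{-1}$: since $G$ is metrizable, any $a,b\in\overline{H}$ are limits of sequences $a_i,b_i\in H$, so $c(a,b)=\lim c(a_i,b_i)$ lies in $\overline{[H,H]}$; as $\overline{[H,H]}$ is a closed subgroup containing every such commutator, it contains the subgroup they generate, namely $[\overline{H},\overline{H}]$. Feeding this into an induction, together with the monotonicity $A\subseteq B\Rightarrow[A,A]\subseteq[B,B]$, yields $\overline{\Gamma}^{(k)}\subseteq\overline{\Gamma^{(k)}}$ for every $k$. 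Taking $k=n$ and using that $G$ is Hausdorff gives $G^{(n)}=\overline{\Gamma}^{(n)}\subseteq\overline{\Gamma^{(n)}}=\overline{\{e\}}=\{e\}$, so $G$ is solvable as an abstract group.

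It remains to upgrade abstract solvability to solvability as a Lie group, that is, to produce the chain of closed normal subgroups with commutative successive quotients that characterizes solvable Lie groups in the preliminaries. The abstract derived subgroups $G^{(k)}$ need not be closed, so I would instead use their closures $\overline{G^{(k)}}$. Each $G^{(k)}$ is characteristic in $G$, hence normal, and the closure of a normal subgroup is again normal because conjugation is a homeomorphism; thus $G=\overline{G^{(0)}}\rhd\overline{G^{(1)}}\rhd\cdots\rhd\overline{G^{(n)}}=\{e\}$ is a descending chain of closed normal subgroups of $G$. The key inclusion applied once more shows that each successive quotient is commutative, since $[\overline{G^{(k)}},\overline{G^{(k)}}]\subseteq\overline{[G^{(k)},G^{(k)}]}=\overline{G^{(k+1)}}$. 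This is exactly the description of solvability recalled earlier, so $G$ is a solvable Lie group.

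The main obstacle is the interchange of closure and commutator packaged in the key inclusion $[\overline{H},\overline{H}]\subseteq\overline{[H,H]}$; once that is in hand, both the abstract-solvability conclusion and the passage to a chain of closed normal subgroups with abelian quotients follow formally. A secondary point to keep straight is that the abstract derived subgroups are generally not closed, which is precisely why one must replace $G^{(k)}$ by $\overline{G^{(k)}}$ before invoking the Lie-group characterization of solvability.
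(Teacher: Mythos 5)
Your proof is correct and rests on the same mechanism as the paper's: continuity of the commutator map shows that taking closures preserves commutativity of successive quotients, and the resulting chain of closed normal subgroups witnesses solvability of $G$ as a Lie group. The only cosmetic difference is that you run the derived series (first of $\Gamma$, then of $G$) and isolate the inclusion $[\overline{H},\overline{H}]\subseteq\overline{[H,H]}$ explicitly, whereas the paper directly closes up an arbitrary solvable series $\Gamma=\Gamma_0\rhd\cdots\rhd\Gamma_n=\{e\}$ and appeals to the same continuity argument implicitly.
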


\begin{proof}
Since $\Gamma$ is solvable, we have a sequence of normal subgroups 
$\Gamma=\Gamma_0\rhd \Gamma_1\rhd...\rhd \Gamma_n=\{e\}$ such that $\Gamma_i/\Gamma_{i+1}$ is commutative for each
$i$. Let $G_i=\overline \Gamma_i$. Then we get a decreasing sequence of closed normal subgroups
$G=G_0\rhd G_1\rhd...\rhd G_n=\{e\}$. Since $\Gamma_{i+1}\subset G_{i+1}$ and $\Gamma_i/\Gamma_{i+1}$ is commutative,
$\Gamma_i/(\Gamma_i\cap G_{i+1})$ is commutative. So $G_i/G_{i+1}$ is commutative by the continuity of group operations.
Then $G$ is solvable.
\end{proof}

\begin{lem}\label{component group}
Let $G$ be a compact Lie group and $G_0$ be the connected component of $e\in G$. Suppose $G$ acts transitively on a connected
compact manifold $M$. Then the $G_0$ action on $M$ is also transitive.
\end{lem}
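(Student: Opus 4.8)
The plan is to identify $M$ with a homogeneous space of $G$ and then show that the $G_0$-orbits are open, so that the connectedness of $M$ forces there to be exactly one of them. First I would record the two structural facts about $G_0$ that make everything work: since $G$ is a Lie group it is locally connected, so its identity component $G_0$ is an \emph{open} subgroup, and it is moreover normal; because $G$ is compact, $G_0$ has finite index. Fixing a base point $x\in M$, transitivity of the $G$-action gives $Gx=M$, and Theorem~\ref{homogeneous space} then supplies a homeomorphism $G/G_x\to M$, $gG_x\mapsto gx$, which is equivariant for the natural $G$-actions on the two sides. Thus it suffices to prove that $G_0$ acts transitively on $G/G_x$.

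The key step is to show that every $G_0$-orbit in $G/G_x$ is open. Let $\pi\colon G\to G/G_x$ be the canonical projection. For any topological group and subgroup the map $\pi$ is open, since for open $U\subset G$ the preimage $\pi^{-1}(\pi(U))=UG_x$ is a union of translates of $U$, hence open. Now the $G_0$-orbit of a point $gG_x$ is exactly $\{hgG_x:h\in G_0\}=\pi(G_0g)$. Because $G_0$ is open and right translation by $g$ is a homeomorphism of $G$, the set $G_0g$ is open; applying the open map $\pi$ shows that the orbit $\pi(G_0g)$ is open in $G/G_x$.

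Finally, the $G_0$-orbits partition $G/G_x$ into open sets, and each is therefore also closed, being the complement of the union of the remaining orbits. Since $M$, and hence $G/G_x$, is connected, there can be only one orbit, which is exactly what transitivity of $G_0$ means; transporting this back through the equivariant homeomorphism gives the claim for $M$. I expect the only real subtlety to be keeping the whole argument inside the purely topological category: the tempting shortcut is to note that each $G_0$-orbit is an immersed submanifold of the full dimension $\dim G-\dim G_x=\dim M$ and hence open, but this route presupposes that the action is smooth, whereas here the action is only by homeomorphisms. Routing the openness through the open projection $\pi$ avoids this assumption entirely.
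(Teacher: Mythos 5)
Your proof is correct, and the overall skeleton matches the paper's: show that a $G_0$-orbit is open, hence clopen (orbits partition the space), and conclude by connectedness of $M$. The difference lies in how openness of the orbit is obtained. The paper works directly on $M$: it writes $M=\bigcup_{i=1}^k g_iG_0x_0$ using the finite coset decomposition of $G_0$ in the compact group $G$, deduces that $G_0x_0$ has nonempty interior (implicitly a Baire-category argument, since the finitely many translates are compact and cover $M$), and then upgrades this to openness by homogeneity of the orbit. You instead transport the problem to the homogeneous space $G/G_x$ via Theorem~\ref{homogeneous space} and observe that the quotient map $\pi\colon G\to G/G_x$ is open while $G_0$ is open in the Lie group $G$, so each orbit $\pi(G_0g)$ is open outright. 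Your route avoids both the Baire step and the finite-index fact (which you state but never use), at the mild cost of invoking the identification $M\cong G/G_x$ and checking equivariance; the paper's route stays entirely inside $M$ and never needs to know that $G_0$ is open in $G$, only that it is closed of finite index. Your closing remark about avoiding the smooth-category shortcut is well taken, though in this paper the acting group is a subgroup of $\mathrm{Homeo}(Y)$, so the caution is indeed warranted.
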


\begin{proof}
It is well known that $G_0$ is a clopen normal subgroup of $G$. Since $G$ is compact, $G_0$ has finite index in $G$.
Let $G=g_1G_0\cup...\cup g_kG_0$ be the coset decomposition, where $k=[G:G_0]$. Fix an $x_0\in M$.
Then $\cup_{i=1}^kg_iG_0x_0=Gx_0=M$, since the $G$ action is transitive. So, $G_0x_0$ contains a nonempty open set,
which implies that $G_0x_0$ is open by the homogeneous of the orbit $G_0x_0$. Thus $G_0x_0$ is clopen in $M$.
Hence $G_0x_0=M$ by the connectedness of $M$.
\end{proof}

\noindent {\bf Proof of Theorem \ref{main theorem}.} Let $X$ be a connected compact metric space and let $\Gamma$ be a finitely generated amenable group.
Suppose $X$ admits a minimal distal action $\phi:\Gamma\rightarrow $Homeo$(X)$. We will show that
the first cohomology group ${\check H}^1(X)$ with integer coefficients is nontrivial.

By Theorem \ref{maximal fator}, there is a nontrivial equicontinuous factor $(Y, \Gamma, \psi)$ of $(X, \Gamma, \phi)$,
which is still minimal and connected. Set $H=\overline{\psi(\Gamma)}$. From Theorem \ref{compact group},  $H$ is a compact subgroup of ${\rm Homeo}(Y)$ with respect
to the uniform convergence topology.  Applying Theorem \ref{small group}, we can take a small normal subgroup $H'$ of $H$ such that $H/H'$ is a Lie group and $H'y$ is a proper subset of $Y$ for every $y\in Y$.

Then we get an equicontinuous
action $\psi'$ of the Lie group $H/H'$ on the quotient space $Y/H'$ by Theorem \ref{induce action}; in particular, $Y/H'$ is homeomorphic
 to the quotient space $(H/H')/F_y$ by Theorem \ref{homogeneous space}, where $y\in Y$ and $F_y=\{gH'\in H/H':gH'.H'y=H'y\}$. 
 This implies that $Y/H'$ is a connected compact manifold of dimension $\geq 1$ (see \cite[Theorem 3.58]{War}), and hence
 $H/H'$ is a compact Lie group of dimension $\geq 1$. From Theorem \ref{amenable}-(4),
$\psi(\Gamma)H'$ is an amenable subgroup of $H/H'$ (as abstract groups); from Theorem \ref{finite representation}, Theorem \ref{Tits}, and Theorem \ref{amenable}-(2), we
see that $\psi(\Gamma)H'$ has a solvable subgroup $\Gamma'$ of finite index. Then $\overline {\Gamma'}$
is a closed subgroup of $H/H'$ with finite index, and hence contains the connected component $(H/H')_0$ of $H/H'$. Since $\Gamma'$
is solvable as a abstract group, $\overline {\Gamma'}$ is a solvable Lie group by Lemma \ref{solvable closure}. So,
$(H/H')_0$ is a connected compact solvable Lie group, which is then isomorphic to $\mathbb T^n$ with $n\geq 1$ by Corollary \ref{compact solvable}.

It follows from Lemma \ref{component group} that the $(H/H')_0$ action on $Y/H'$ is still transitive. So, $Y/H'$
is homeomorphic to $(H/H')_0/(F_y\cap (H/H')_0)$. Since $(H/H')_0$
is isomorphic to $\mathbb T^n$, $Y/H'$ is homeomorphic to $\mathbb T^m$ for some $1\leq m\leq n$.
Thus the first {\v C}ech cohomology group ${\check H}^1(Y/H')\cong {\check H}^1(\mathbb T^m)\cong \mathbb Z^m\not=0$.
 Noting that $(Y/H', \Gamma)$ is a minimal equicontinuous factor of $(X, \Gamma)$,
we denote by $\pi$ the factor map between them. Then $\pi$ is open and surjective by Theorem \ref{homomorphism open}.
Applying Theorem \ref{open inverse}, we have $\pi^*:{\check H}^1(Y/H')\rightarrow {\check H}^1(X)$ is injective;
in particular, ${\check H}^1(X)\not=0$. Since  the first {\v C}ech cohomology group coincides with the first singular cohomology group
when $X$ is homotopically equivalent to a CW complex, the fundamental group of $X$ is nontrivial.
\hfill{$\Box$}



\end{document}